\numberwithin{equation}{subsection}
\numberwithin{equation}{section}
\begin{document}
\numberwithin{equation}{subsection}
\numberwithin{equation}{section}
\begin{subequations}
\end{subequations}
\newtheorem{theorem}{Theorem}[section]
\newtheorem{lemma}{Lemma}
\newtheorem{definition}{Definition}
\newtheorem{proposition}[theorem]{Proposition}
\newtheorem{corollary}[theorem]{Corollary}

\date{}
\date{}
\title{\bf Dynamical behaviour of an ecological system with Beddington-DeAngelis functional response}
\author{Sahabuddin Sarwardi$^{(\dagger)}$\footnote{Author to whom all correspondence should be addressed}~, Md. Reduanur Mandal$^{(\dagger)}$ and Nurul Huda Gazi$^{(\dagger)} $  \\
$^{(\dagger)}$  Department of Mathematics, Aliah University, IIA/27, New Town\\
Kolkata - 700 156, West Bengal, India.\\
email: \href{s.sarwardi@gmail.com}{s.sarwardi@gmail.com}}

\maketitle

\begin{abstract}
The objective of this paper is to study the dynamical behaviour systematically of an ecological system with Beddington-DeAngelis functional response which avoids the criticism occurred in the case of ratio-dependent functional response at the low population density of both the species. The essential mathematical features of the present model have been analyzed thoroughly in terms of the local and the global stability and the bifurcations
arising in some selected situations as well. The threshold values for some parameters indicating
the feasibility and the stability conditions of some equilibria are also determined. We show that the dynamics outcome of the interaction among the species are much sensitive to the system parameters and initial population volume. The ranges of the
significant parameters under which the system admits a Hopf bifurcation are investigated. The explicit
formulae for determining the stability, direction and other properties of bifurcating periodic solutions
are also derived with the use of both the normal form and the central manifold theory (cf. Carr \cite{Carr}). Numerical illustrations
are performed finally in order to validate the applicability of the model under consideration.\end{abstract}

{\textbf{Mathematics Subject Classification}}: {92D25, 92D30, 92D40.}

\textbf{Keywords:}  Ecological model; Stability; Hopf bifurcation; Limit Cycle; Center manifold; Numerical Simulation.

\section{Introduction}
Mathematical model is an important tool in analyzing the ecological models. Ecological problems are challenging and important issues from  both the ecological and the mathematical point of view (cf. Anderson and May \cite{May81}, Beretta and Kuang \cite{Beretta98}, Freedman \cite{F90}, Hadeler and Freedman \cite{HF89},  Hethcote et al. \cite{HWM04}, Ma and Takeuchi \cite{Ma98}, Venturino \cite{V95}, Xiao and Chen \cite{XiaCh01}). The dynamic relationship between predator and its prey has long been and will continue to be one of the dominant themes in both ecology and mathematical ecology due to
its universal existence and importance. The most common method of modelling that ecological interactions consists of two differential equations with simple correspondence between the consumption of prey by the admissible predator and their population growth. The traditional predator-prey models have been
studied extensively (cf. Cantrell and Cosner \cite{Cantrell}, Cosner et al. \cite{Cosner99}, Cui and Takeuchi \cite{Cui}, Huo et al. \cite{Huo} and Hwang \cite{Hwang}), but those are questioned by
several biologists. The most crucial element in these models is the ``functional response"-- the expression that describes the rate at which the number of prey consumed by a predator. Modifications were limited to replacing the Malthusian growth function, the predator per capita consumption of prey functions such as Holling type I, II, III functional responses or density dependent mortality rates. These functional responses depend only on the prey volume $x$, but soon it became clear that the predator volume $y$ can influence this function by direct interference while searching or by pseudo interference (cf.  Curds and Cockburn \cite{Curds}, Hassell and Varley \cite{Hassell} and Salt \cite{Salt}). A simple way of incorporating predator dependence in the functional response was proposed by Arditi and Ginzburg \cite{Ginzburg}, who considered this response function as a function of the ratio $x/y$. The ratio-dependent response function produces richer dynamics than that of all the Holling types responses, but it is often criticized that the paradox occurred at the low densities of both populations size. Normally one would expect that the population growth rate decrease when both the populations fall bellow some critical volume, because food-searching effort becomes very high. For some ecological interaction ratio-dependent model give the negative feed back. Thus, the Lotka-Volterra type predator-prey model with the Beddington-DeAngelis functional response has been proposed and well studied. Keeping these in mind, the proposed model can
be expressed as follows:
\begin{eqnarray}
\left \{ \begin{array}{ll}
\frac{dx'_1}{dt} = rx'_1\bigl(1-
\frac{x'_1}{k}\bigr)-\frac{c_{1}x'_1 x'_2}{a_{1}+x'_1+b_{1}x'_2}\\
\frac{dx'_2}{dt} = -\delta_{1}x'_2+\frac{c_{1}e_{1}x'_1x'_2}{a_{1}+x'_1+b_{1}x'_2}\end{array}\right.
 \label{eq1}\end{eqnarray}

 with the initial conditions ${x'_1}(0) = {x'_{1}}^{0} > 0$ and ${x'_{2}}(0) ={x'_{2}}^{0} >0$. The functions ${x'_1}(t), ~{x'_2}(t)$ are the volumes of prey and predator at any time $t.$  All the system parameters are assumed to be positive and have their usual biological meanings.
The functional response $\frac{c_{1}x'_1 x'_2}{a_{1}+x'_1+b_{1}x'_2}$ in system (\ref{eq1}) was introduced by Beddington \cite{Beddington75} and DeAngelis et al. \cite{DeAngeli75} as a solution of the observed problem in the classic predator-prey theory. It is similar to the well-known Holling type-II functional response but has an
extra term $b_1 x_2$ in the denominator which models mutual interference between predators. It represents the most qualitative features of the ratio-dependent models, but avoids the ``low-densities problem", which usually the source of controversy.
It can be derived mechanistically from considerations of time utilization (cf. Beddington \cite{Beddington75}) or spatial
limits on predation.

The present study under consideration has been carried out sequentially in the latter sections as follows: The basic assumptions and the model formation are proposed in Section \ref{model}. Section \ref{preli} deals with some preliminary results. The equilibria and their feasibility are rightly given in Section \ref{equilibria}. The local analyses of the system around the boundary as well as interior equilibria are discussed in Section \ref{localstability}. The global analysis of the system around the interior equilibrium is studied at length in Section \ref{Direction}. Simulation results are reported in Section \ref{Numerical} while a final discussion and interpretation of the results of the present study in ecological terms are rightly included in the concluding Section \ref{Discussion}.

\section{Model formulation}\label{model}
Firstly we replaced the logistics growth function $r x_1(1-\frac{x_1}{k})$ of the prey species by the modified quasi-linear growth function $rx_1(1-\frac{x_1}{x_1+k}) = r (\frac{k}{k+x_1}) x_1 = r' x_1$ $(r'<r)$ in order to make the model free from any axial equilibrium. Which fits better for some special type of ecosystem, whereof environmental carrying capacity varies w.r.t. its prey volume, i.e., carrying capacity is always greater than its present prey volume. In the present model we introduce one more predator species in the model (\ref{eq1}) to make it one step closure to reality. Thus, our final model is extended to the following form:

\begin{eqnarray}
\left \{ \begin{array}{lll}
  \frac{dx_1}{dt} = rx_1(1-\frac{x_1}{x_1+k})-\frac{c_{1}x_1 x_2}{a_{1}+x_1+b_{1}x_2}
                  -\frac{c_{2}x_1x_3}{a_{2}+x_1+b_{2}x_3} \\
  \frac{dx_2}{dt} = -\delta_{1}x_2+\frac{c_{1}e_{1}x_1x_2}{a_{1}+x_1+b_{1}x_2},\\
  \frac{dx_3}{dt} = -\delta_{2}x_3+\frac{c_{2}e_{2}x_1x_3}{a_{2}+x_1+b_{2}x_3}
\end{array}\right.
 \label{eq2}\end{eqnarray} where
 $x_1$ is the population volume of the two prey species and $x_2,$ $x_3$ are the population
 volumes of the predator species at any time $t$. It is assumed that all the system parameters are positive constants. Here $r$ and $k$ are the growth rate and the half-saturation constant for the prey
species, $\delta_1,$ $\delta_2$ are the first and second predator's death rate respectively.  $c_{1}$, $c_{2}$ are the respective search rates of the first and second predator on the prey species,~ $\frac{c_1}{a_1},$
 $\frac{c_2}{a_2}$ are the maximum number of prey that can be
eaten by the first and second predator per unit time respectively; $\frac{1}{a_1}$, $\frac{1}{a_2}$ being their respective half saturation rates while $e_1$, $e_2$ are the conversion factors, denoting the number of newly born first and second predator
for each captured prey species respectively $(0<e_1,~e_2<1)$. The parameters $b_1$ and $b_2$ measure the coefficients of mutual interference among the first and second predator species respectively. The terms $\frac{c_{1}x_1 x_2}{a_{1}+x_1+b_{1}x_2}$
and $\frac{c_{2}x_1x_3}{a_{2}+x_1+b_{2}x_3}$ denote the respective predator responses on the first and second prey species.
This type of predator response function is known as Beddington-DeAngelis response function (cf.
Beddington \cite{Beddington75} and DeAngelis et al. \cite{DeAngeli75}).

\section{Some preliminary results}\label{preli}
\subsection{Existence and positive invariance}
Letting, $x = (x_1,x_2,x_3)^t,$ $f : \mathbf{R}^3 \rightarrow \mathbf{R}^3,$ $F = (f_1,f_2,f_3)^t,$
the system (\ref{eq2}) can be rewritten as $\dot{x} = f(x)$. Here
$f_i \in C^{\infty}(\mathbf{R})$~for $i=1,2,3,$~ where
$f_1=rx_1(1-\frac{x_1}{x_1+k})-\frac{c_{1}x_1 x_2}{a_{1}+x_1+b_{1}x_2}
                  -\frac{c_{2}x_1x_3}{a_{2}+x_1+b_{2}x_3}$, $f_2=-\delta_{1}x_2+\frac{c_{1}e_{1}x_1x_2}{a_{1}+x_1+b_{1}x_2}$ and $f_3=-\delta_{2}x_3+\frac{c_{2}e_{2}x_1x_3}{a_{2}+x_1+b_{2}x_3}.$
Since the vector function $f$ is a smooth function of the variables $(x_1,x_2,x_3)$ in the positive octant $\Omega^0=\{(x_1,x_2,x_3):~x_1>0,x_2>0,x_3>0\},$ the local existence and uniqueness of the solution of the system (\ref{eq2})  hold.

\subsection{Persistence}\label{persistence}
If a compact set $D\subset \Omega^0=\{(x_1,x_2,x_3):~ x_{i}> 0, ~ i =1, 2, 3\}$ exists such that all solutions of (\ref{eq2})
eventually enter and remain in $D$, the system is called persistent.

\begin{proposition}
The system (\ref{eq2}) is persistent if the conditions:
$(i)\, r > \delta_1 + \delta_{2},~
(ii) ~ x_{1_{1}} >\frac{a_{2}\delta_{2}}{c_{2} e_{2} -\delta_{2}}, \, (iii) \, x_{1_{2}} >\frac{a_{1}\delta_{1}}{c_{1} e_{1} -\delta_{1}}$ are satisfied.
\end{proposition}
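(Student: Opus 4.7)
The plan is to prove uniform persistence by combining a boundedness step with a repelling-boundary step, so that the $\omega$-limit set of every positive orbit lies in a compact set $D \subset \Omega^0$.

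For the boundedness step I would work with the weighted combination $V(t) = e_1 e_2\, x_1(t) + e_2\, x_2(t) + e_1\, x_3(t)$. The weights are chosen precisely so that the Beddington--DeAngelis loss terms in $\dot x_1$ cancel the matching gain terms in $\dot x_2$ and $\dot x_3$, leaving
\[
\dot V \;=\; \frac{e_1 e_2\, r k\, x_1}{x_1+k} \;-\; e_2\delta_1 x_2 \;-\; e_1\delta_2 x_3.
\]
The first term is uniformly bounded by $e_1 e_2\, rk$, so for any $\mu<\min(\delta_1,\delta_2)$ one has $\dot V + \mu V \le e_1 e_2\, rk + \mu e_1 e_2 x_1$. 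To close this into a Gronwall-type bound I would feed in the strictly positive lower bounds on $x_2$ and $x_3$ from the second step: once these are known, the Beddington--DeAngelis terms provide a genuine linear sink in $\dot x_1$, and condition (i) $r>\delta_1+\delta_2$ is exactly what is needed for this sink eventually to dominate the saturated influx $rk$, yielding a uniform upper bound on $x_1$ and, via the predator equations, on $x_2$ and $x_3$.

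For the repelling-boundary step, note that $a_i\delta_i/(c_ie_i-\delta_i)$ is precisely the prey density at which the per capita growth rate of predator $i$ vanishes in the limit $x_i\to 0^+$. Conditions (ii) and (iii) then say that at the two boundary equilibria $E_{1_1}$ (with $x_3=0$) and $E_{1_2}$ (with $x_2=0$) the surviving prey abundance $x_{1_j}$ strictly exceeds this invasion threshold, so the transverse eigenvalue of the Jacobian in the direction of the missing predator is positive and the equilibrium is a saddle with its unstable manifold pointing into $\Omega^0$. Condition (i) simultaneously rules out any $\omega$-limit on the axis $\{x_1=0\}$. I would then invoke the Butler--Freedman--Waltman uniform persistence theorem (or equivalently Hutson's average Lyapunov function technique) applied to the flow of (\ref{eq2}): having checked, using the local analysis of Section~\ref{localstability}, that the boundary equilibria are isolated and their union acyclic, the transverse-eigenvalue positivity forces every interior orbit to remain bounded away from $\partial\Omega^0$.

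The principal obstacle is the interdependence of the two steps. The modified growth function $rkx_1/(x_1+k)$ saturates rather than being self-regulating, so in the predator-free subsystem $x_1(t)$ grows linearly to infinity; consequently the Lyapunov bound above cannot close without first knowing that predators stay away from zero. This is why all three conditions must be used simultaneously: (ii) and (iii) prevent predator extinction and thereby activate the predation sinks in $\dot x_1$, while (i) guarantees that these sinks can overcome the prey influx. Verifying the isolatedness and acyclicity hypotheses of the persistence theorem is the remaining technical task, but it reduces to routine bookkeeping with the eigenvalue formulae derived in Section~\ref{localstability}.
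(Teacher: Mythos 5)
Your boundary-repulsion step is, in substance, the paper's own proof: the paper applies the Gard--Hallam average Lyapunov function $V=x_1^{\gamma}x_2^{\gamma}x_3^{\gamma}$ and checks that $\dot V/V$ is positive at $E_0$, $E_1$, $E_2$, and the quantities it computes at $E_1$ and $E_2$, namely $\gamma\bigl(-\delta_2+\tfrac{c_2e_2x_{1_1}}{a_2+x_{1_1}}\bigr)$ and $\gamma\bigl(-\delta_1+\tfrac{c_1e_1x_{1_2}}{a_1+x_{1_2}}\bigr)$, are exactly the transverse eigenvalues you identify, positive precisely under (ii) and (iii). The genuine gap lies where you depart from that. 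First, you misassign the role of condition (i): $r>\delta_1+\delta_2$ is a \emph{lower} bound on $r$, so it cannot be ``exactly what is needed'' for the predation sinks to dominate the prey influx $rk$; in the paper it serves only to make the average Lyapunov exponent at the origin positive, $\Pi(E_0)=\gamma(r-\delta_1-\delta_2)>0$, for the equal-weight choice $\gamma_i=\gamma$. Second, your boundedness closure fails: the Beddington--DeAngelis term $\tfrac{c_1x_1x_2}{a_1+x_1+b_1x_2}$ saturates in $x_1$ (it is bounded above by $c_1x_2$), so a uniform lower bound $x_2,x_3\ge\sigma$ does \emph{not} produce a ``genuine linear sink'' in $\dot x_1$; for large $x_1$ one only has $\dot x_1\approx rk-c_1x_2-c_2x_3$, which need not become negative for any fixed $\sigma$, and none of (i)--(iii) forces the predator densities to be large enough to make it so. Hence the Gronwall step on $e_1e_2x_1+e_2x_2+e_1x_3$ never closes, and the circularity you yourself flag (interior lower bounds needed for boundedness, boundedness needed for persistence) is acknowledged but not resolved.

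If you drop the attempted dissipativity argument and instead take ultimate boundedness as given (the paper treats it separately, in its boundedness subsection, and does not attempt it inside this proposition), then your Butler--Freedman--Waltman route is a legitimate alternative to the paper's average Lyapunov computation: the boundary flow consists of $E_0$, whose stable set lies in the plane $x_1=0$, together with $E_1$ and $E_2$; the union is acyclic; and (ii), (iii) give the required transverse repulsion. Note, though, that in that framework condition (i) is not what excludes $\omega$-limits on $\{x_1=0\}$ (the instability of $E_0$ into the interior comes from $r>0$ alone); (i) is an artifact of the paper's equal-exponent choice in the average Lyapunov function, so your reassignment of it to a boundedness role is doubly off target. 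As written, the load-bearing new step of your proposal is the one that does not work.
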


\begin{proof}
We use the method of average Lyapunov function (cf. Gard and Halam \cite{GaHa}), considering a function of the form
$$
V(x_1,x_2,x_3) = x_1^{\gamma_{1}} x_2^{\gamma_{2}} x_3^{\gamma_{3}},
$$
where $\gamma_{1},  \gamma_{2}$ and $\gamma_{3}$ are positive constants to be determined. We define
\begin{eqnarray}
\Pi(x_1,x_2,x_3) &=&\frac{\dot V}{V}\nonumber\\
&=& \gamma_{1}\Bigl( r-\frac{r x_1}{x_1+k}-\frac{c_{1} x_2}{a_{1}+x_1+b_{1}x_2}
                  -\frac{c_{2}x_3}{a_{2}+x_1+b_{2}x_3}\Bigr) + \gamma_{2}\Bigl(-\delta_{1}+\frac{c_{1}e_{1}x_1}{a_{1}+x_1+b_{1}x_2}\Bigr)  \nonumber\\ &~&+ \gamma_{3}\Bigl(-\delta_{2}+\frac{c_{2}e_{2}x_1}{a_{2}+x_1+b_{2}x_3}\Bigr).\nonumber\end{eqnarray}
We now prove that this function is positive at each boundary equilibrium. Let $\gamma_{i} =\gamma,$ for $i= 1,~ 2,~3.$ In fact at $E_{0},$ we have
$\Pi(0,0,0) =  r - \delta_1 - \delta_{2} > 0$  from the condition (i).
Moreover, from condition ($ii$) and $(iii)$, we find the values of $\Pi$ at $E_{1}$ and $E_{2}$ respectively,
\begin{eqnarray*}
\Pi(x_{{1}_1},x_{{2}_1},0) = \gamma\Bigl(-\delta_{2}+\frac{e_{2}c_{2}x_{1_{1}}}{a_{2}+ x_{{1}_1}}\Bigr) > 0,\\
\Pi(x_{{1}_2},0,x_{{3}_2}) = \gamma\Bigl(-\delta_{1}+\frac{e_{1}c_{1}x_{1_{2}}}{a_{1}+ x_{{1}_2}}\Bigr) > 0 .
\end{eqnarray*}

Hence, there always exists a positive number $\gamma$ such that $\Pi > 0$ at the boundary equilibria.
Hence $V$ is an average Lyapunov function and
thus, the system (\ref{eq2}) is persistent.
\end{proof}

Since the system is uniformly persistent, there exists $\sigma > 0$ and $\tau > 0$ such
that $x_{i}(t) > \sigma,$~for all~ $t > \tau,$ $i = 1,~ 2,~ 3.$

\subsection{Boundedness}\label{bounded}
Boundedness implies that the system is consistent with biological significance.
The following propositions ensure the boundedness of the system (\ref{eq2}).
\begin{proposition}\label{Prop1}
The prey population is always bounded from above.\label{prop3.1}
\end{proposition}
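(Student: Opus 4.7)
My plan is to start from the most direct comparison. Since the two Beddington-DeAngelis predation terms are non-negative for $x_1, x_2, x_3 > 0$, dropping them in the prey equation of (\ref{eq2}) yields the scalar differential inequality
\[
\dot{x}_1 \;\le\; r x_1\Bigl(1 - \frac{x_1}{x_1+k}\Bigr) \;=\; \frac{rk\, x_1}{x_1+k} \;\le\; rk.
\]
This already guarantees existence of $x_1(t)$ on any finite time interval, but it does not yield a uniform-in-$t$ bound: the right-hand side saturates at the positive constant $rk$ as $x_1\to\infty$, so unlike the classical logistic case (where the $-r x_1^2/k$ term would immediately force boundedness) the modified quasi-linear growth makes the predator effect unavoidable.

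To upgrade to a global bound I would combine the above with the persistence result of Section~\ref{persistence}. Set
\[
W(t) \;=\; x_1(t) + \frac{x_2(t)}{e_1} + \frac{x_3(t)}{e_2},
\]
with weights chosen precisely so that each functional response term in $\dot x_1$ cancels against the corresponding conversion term in $\dot x_j$. A short calculation gives
\[
\dot{W} \;=\; \frac{rk\, x_1}{x_1+k} \;-\; \frac{\delta_1}{e_1}\, x_2 \;-\; \frac{\delta_2}{e_2}\, x_3 \;\le\; rk - \mu\,(x_2 + x_3),
\]
with $\mu = \min\{\delta_1/e_1,\, \delta_2/e_2\}$. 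Using persistence, $x_2(t), x_3(t) \ge \sigma > 0$ for all $t \ge \tau$, so $\dot W \le rk - 2\mu\sigma$ for $t \ge \tau$. Whenever the sign condition $2\mu\sigma > rk$ holds, this forces $W$ to be eventually strictly decreasing, hence uniformly bounded; since $x_1 \le W$ by construction, the desired bound on the prey population follows by a Gronwall-type argument.

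The main obstacle I anticipate is justifying the sign condition $2\mu\sigma > rk$, because the persistence proof of Section~\ref{persistence} produces some $\sigma > 0$ without quantifying it. If necessary I would bypass this by working directly with the predator equations: for $x_1$ large enough, each predator per-capita birth rate $c_j e_j x_1/(a_j + x_1 + b_j x_{j+1})$ approaches $c_j e_j$, which exceeds $\delta_j$ (the feasibility inequality already invoked in the persistence proposition via $c_j e_j - \delta_j > 0$), so $\dot x_j > 0$ and the predators must grow until they push $x_1$ back down. Formalising this feedback through a rectangular invariant region in the positive octant $\Omega^0$ should yield a clean uniform upper bound on $x_1(t)$ independent of any quantitative persistence estimate.
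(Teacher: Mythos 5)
Your opening observation is right, and it matches the paper's own remark that the prey grows without bound in the absence of predators: dropping the predation terms cannot work, so the predators must enter the estimate. But your main route fails for a structural reason, not merely for lack of a quantitative constant. The weighted sum $W=x_1+\frac{x_2}{e_1}+\frac{x_3}{e_2}$ and the cancellation giving $\dot W=\frac{rk\,x_1}{x_1+k}-\frac{\delta_1}{e_1}x_2-\frac{\delta_2}{e_2}x_3$ are correct (this is exactly the device the paper reserves for the next result, Proposition \ref{prop2}), but the right-hand side of $\dot W\le rk-\mu(x_2+x_3)$ contains no $-x_1$ term, so it controls $W$ only if the predator term eventually exceeds $rk$. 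The sign condition $2\mu\sigma>rk$ you invoke is not just hard to justify; it can never hold for a genuine persistence constant: if $x_2(t)+x_3(t)\ge 2\sigma$ for all $t\ge\tau$ and $2\mu\sigma>rk$, then $\dot W\le-(2\mu\sigma-rk)<0$ for all $t\ge\tau$, whence $W(t)\le W(\tau)-(2\mu\sigma-rk)(t-\tau)\to-\infty$, contradicting $W>0$. So whatever $\sigma$ Section \ref{persistence} supplies must satisfy $2\mu\sigma\le rk$, and the main argument is vacuous; even in the hypothetical case it would only yield the initial-data-dependent bound $W(\tau)$ rather than an ultimate bound.

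The fallback is circular as sketched. The claim that each predator's per-capita growth approaches $c_je_j-\delta_j>0$ for large $x_1$ presupposes that the $b_jx_j$ term in the denominator stays small relative to $x_1$, i.e.\ an upper bound on the predators, which is part of what is being proved; and a face $\{x_1=A\}$ of a rectangular invariant region has inward flow only where, for large $A$, roughly $c_1x_2+c_2x_3>rk$ holds, which is again a quantitative lower bound on the predators — the very ingredient you could not supply above. The paper orders the estimates differently: it first bounds $x_2$ and $x_3$ above from $\dot x_2\le-(\delta_1-c_1e_1)x_2$ and its analogue (under $\delta_i>c_ie_i$), then inserts the persistence lower bound $\sigma$ and the predator bound $m$ into the prey equation alone to obtain a logistic-type differential inequality for $x_1$, giving the ultimate bound $w$; only afterwards, with $w$ in hand, does it run the weighted-sum estimate for the total population. (The paper's own proof also imposes parameter restrictions and a window for $\sigma$ that the bare statement omits, but its order of estimates — predators first, then prey — is what your proposal is missing.) To salvage your plan you would need to establish bounds on the predator levels first and only then close the estimate on $x_1$.
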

\begin{proof}
Before proving that the prey population is bounded above, we need to prove that the predator populations $x_{2}$ and $x_3$ are bounded above. To prove this result, considering the second sub equation of the system  (\ref{eq2}) and one can obtain the following differential inequality:
\begin{eqnarray}
\frac{dx_2}{dt}\leq- (\delta_1-c_1 e_1)x_{2}.\nonumber
\end{eqnarray}
Integrating the above differential inequality between the limits 0 and $t,$ we have
$x_{2}(t)\leq x_{2}(0) e^{-(\delta_1-c_1 e_1)t}.$ Thus, if $(\delta_1-c_1 e_1)>0,$ then it is obviously found a positive number $\tau_{1}$ there exists a positive constant $m_1$ such that $x_{2}(t) \leq m_1,$ for all $t\geq \tau_{1}.$ By using the similar argument, one can obtain that, if $(\delta_2-c_2 e_2)>0,$ then corresponding to a positive number $\tau_{2}$ there exists a positive constant $m_2$ such that $x_{3}(t) \leq m_2,$ for all $t\geq \tau_{2}.$ Both the results can be written unitedly as $x_{i}> m = \min{(m_{1}, m_{2})}$ for all $t> \tau_3= \max{(\tau_1, \tau_2)},$ $i =2,3,$ with the additional condition $\min{(\delta_1-c_1 e_1,~\delta_2-c_2 e_2)} >0.$

Now from the first sub-equation of (\ref{eq2}), the following inequality is found
\begin{eqnarray}
\frac{dx_1}{dt}\leq  \frac{\bigl((e_1+e_2)\sigma -rk\bigr)x_1}{km}\Bigl(\frac{k \bigl(r m-(e_1+e_2)\sigma\bigr)}{(e_1+e_2)\sigma -rk}-x_1\Bigr).\nonumber
\end{eqnarray}
Hence, by using standard but simple argument, we have
\begin{eqnarray}
\limsup_{t\rightarrow +\infty}x_1(t)\leq  \frac{k r m -(e_1+e_2) k\sigma}{(e_1+e_2)\sigma -rk}= w, \, \mbox{where}\,\, \frac{rm}{e_1+e_2}<\sigma<\frac{rk}{e_1+e_2}.\nonumber
\end{eqnarray}
\end{proof}
\begin{proposition}  The solutions of (\ref{eq2}) starting
in $\Omega^0$ are uniformly bounded with an ultimate bound.\label{prop2}
\end{proposition}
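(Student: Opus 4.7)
The plan is to construct a linear combination $W(t) = x_1(t) + \frac{1}{e_1}x_2(t) + \frac{1}{e_2}x_3(t)$ and to derive a differential inequality of Gronwall form $\dot{W} + \mu W \leq K$ for suitable positive constants $\mu$ and $K$; from this the ultimate bound on $W$, and hence on each $x_i$, follows by a standard comparison argument.

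First, I would differentiate $W$ along solutions of (\ref{eq2}). The weighting by $1/e_i$ is chosen precisely so that the Beddington--DeAngelis losses in $\dot{x}_1$ cancel exactly against the corresponding conversion gains in $(1/e_i)\dot{x}_i$. After this cancellation only the growth and mortality terms survive, yielding
\begin{equation*}
\dot{W} = \frac{rkx_1}{x_1+k} - \frac{\delta_1}{e_1}x_2 - \frac{\delta_2}{e_2}x_3,
\end{equation*}
in which the two predator terms already contribute with the correct sign.

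Next I would add $\mu W$ to both sides with $\mu := \min\{\delta_1,\delta_2\}$, so that the coefficients of $x_2$ and $x_3$ become non-positive, leaving
\begin{equation*}
\dot{W} + \mu W \leq \frac{rkx_1}{x_1+k} + \mu x_1.
\end{equation*}
Here the main obstacle surfaces: the right-hand side is not yet a constant, and the modified quasi-linear growth $rx_1(1-x_1/(x_1+k)) = rkx_1/(x_1+k)$ was introduced by the authors precisely in order to remove intrinsic self-limitation of the prey, so one cannot close the estimate from the growth term alone. To overcome this I would invoke Proposition \ref{prop3.1}, which has just established $\limsup_{t\to\infty}x_1(t)\leq w$. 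For every $\varepsilon>0$ one then has $x_1(t)\leq w+\varepsilon$ for all $t$ beyond some $T>0$, and consequently $\dot{W}(t)+\mu W(t)\leq rk+\mu(w+\varepsilon)=:K$ on $[T,\infty)$.

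Finally, the scalar comparison principle applied to $\dot{W}+\mu W\leq K$ gives
\begin{equation*}
W(t) \leq \frac{K}{\mu} + \Bigl(W(T) - \frac{K}{\mu}\Bigr)e^{-\mu(t-T)}, \quad t \geq T,
\end{equation*}
so $\limsup_{t\to\infty}W(t)\leq K/\mu$, and since each nonnegative $x_i(t)$ is dominated by a constant multiple of $W(t)$, every component admits the same kind of uniform bound with an explicit ultimate bound independent of the initial data. The only genuinely delicate step is the one that controls $rkx_1/(x_1+k)+\mu x_1$; without the prior bound from Proposition \ref{prop3.1} this term would preclude a constant right-hand side and one would need either a different weighting of $W$ or additional structural hypotheses on the parameters.
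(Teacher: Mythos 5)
Your argument is correct and follows essentially the same route as the paper: the paper likewise works with the weighted total population $\chi=x_1+\frac{x_2}{e_1}+\frac{x_3}{e_2}$, invokes a differential-inequality (comparison) argument, and arrives at an ultimate bound $M$ that, like your $K/\mu$, depends on the ultimate prey bound $w$ from Proposition \ref{prop3.1}. You merely spell out the cancellation of the Beddington--DeAngelis terms and the Gronwall step that the paper delegates to citations, with your shift constant $\mu=\min\{\delta_1,\delta_2\}$ playing the role of the paper's $\rho=\min\{1,\delta_1,\delta_2\}$.
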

 {\textbf{Proof.}}
Considering the total environment population $\chi=x_1+\frac{x_2}{e_1}+\frac{x_3}{e_2}.$ Using the theorem on differential inequality (cf. Birkhoff and Rota \cite{birkhoff1959ordinary}) and following the steps of Haque and Venturino \cite{HV}, Sarwardi et al \cite{sarwardijbp13}, boundedness of the solution trajectories of
this model is established. In particular,
\begin{eqnarray}\label{xub}
\limsup_{t\rightarrow +\infty}{\Bigl(x_1+\frac{x_2}{e_1}+\frac{x_3}{e_2}\Bigr)} \leq \frac{(r+1)k + w}{\rho}= M, \, \mbox{where}~ \rho =\min{(1, \delta_1, \delta_2)},
\end{eqnarray}
with the last bound is independent of the initial condition.

Hence, all the solutions of (\ref{eq2}) starting in $\mathbf{R}_{+}^3 $ for any $\theta> 0$ evolve with respect to time in the compact region
\begin{eqnarray}
\bar{\Omega} =\left\{(x_1,x_2,x_3)\in \mathbf{R}_{+}^3 : x_1+\frac{x_2}{e_1}+\frac{x_3}{e_2}\le M + \theta \right\}.\label{eqbdd1}
\end{eqnarray}
\section{Equilibria and their feasibility}\label{equilibria}
The equilibria of the dynamical system (\ref{eq2}) are given as follows:

     1. The trivial equilibrium point $E_{0}(0,0,0)$ is always feasible.

     2. (a) The first boundary equilibrium point is $E_{1}(x_{1_1},x_{2_1},0).$ The component
     $x_{1_1}$ is a root of the quadratic equation $l_1 x^2_{1_1}+(l_2+l_1 k+r k b_1 e_1) x_{1_1}+l_2 k=0,$
     where $l_1=(\delta_1-c_1e_1)$, $l_2=a_1\delta_1.$ If $l_1<0,$ then the quadratic equation in $x_{1_1}$ possesses a unique positive root and consequently  $x_{2_1}=\frac{(c_{1}e_{1}-\delta_{1})x_{1_1}-a_{1}\delta_{1}}{b_{1}\delta_{1}}.$ The feasibility of the equilibrium $E_1$ is maintained if the condition $x_{1_1}> \frac{b_1 \delta_1}{c_{1}e_{1}-\delta_{1}}$ is satisfied.

2. (b) The second boundary equilibrium point is $E_{2}(x_{1_2},0,x_{3_2}).$ The component $x_{1_2}$ is the root of the quadratic equation
    $m_1 x^2_{1_2}+(m_2+m_1 k+r kb_2 e_2) x_{1_2}+m_2 k=0,$
     where $m_1=(\delta_2-c_2e_2)$, $l_2=a_2\delta_2.$ If $m_1<0,$ then the quadratic equation in $x_{1_2}$ possesses a unique positive root and consequently $x_{3_2}=\frac{(c_{2}e_{2}-\delta_{2})x_{1_2}-a_{2}\delta_{2}}{b_{2}\delta_{2}}.$ The feasibility of the equilibrium $E_2$ is maintained if the condition $x_{1_2}> \frac{b_2 \delta_2}{c_{2}e_{2}-\delta_{2}}$ holds.

3. The interior equilibrium point is $E_*(x_{1*}, x_{2*}, x_{3*}),$
where the first component $x_{1*}$ is the root of the following quadratic equation: \begin{eqnarray}n_1
x^2_{1*}+(n_2+ n_1 k+ rkb_{{1}}b_{{2}}e_{{1}}e_{{2}})x_{1*}+n_2 k=0,\label{inteq}\end{eqnarray} where
$n_1 = b_{{1}}e_{{1}}(\delta_{{2}}-c_2 e_2) +b_{{2}}e_2(\delta_{{1}}-c_{{1}}e_{{1}})$ and $n_2 =b_{{2}}e_{{2}}\delta_{{1}}a_{{1}}+b_{{1}}e_{{
1}}\delta_{{2}}a_{{2}}.$

Case I:  Let $n_1<0.$ In this case there exists exactly one
positive root of the quadratic equation (\ref{inteq}) irrespective of the sign of $(n_2+ n_1 k+ rkb_{{1}}b_{{2}}e_{{1}}e_{{2}}).$

Case II: Let $n_1>0$. In this case there are two possibilities: (i) if $n_2+ n_1 k+ rkb_{{1}}b_{{2}}e_{{1}}e_{{2}}>0$, then there is no positive
solution and (ii) if $n_2+ n_1 k+ rkb_{{1}}b_{{2}}e_{{1}}e_{{2}}<0$, then there exists two positive roots or no positive root.

In this present analysis we consider the Case I. Under this assumption the next two components of the interior equilibrium can be obtained as
$x_{2*}=\frac{(c_{1}e_{1}-\delta_{1})x_{1*}-a_{1}\delta_{1}}{b_{1}\delta_{1}},$~
$x_{3*}=\frac{(c_{2}e_{2}-\delta_{2})x_{1*}-a_{2}\delta_{2}}{b_{2}\delta_{2}}.$ The feasibility of this important equilibrium point $E_{*}$ is confirmed under the condition $x_{1*}>\max
\left\{\frac{a_1\delta_1}{c_1e_1-\delta_1},
\frac{a_2\delta_2}{c_2e_2-\delta_2} \right\}.$ Moreover, the positivity condition of second and third components of the interior equilibrium ensures the impossibility of the Case II.

\text{\bf Remark:} The feasibility and existences conditions of both the planer equilibria $E_{1}$ and $E_{2}$ immediately implies the existence of the unique feasible interior equilibrium point $E_{*}.$ But the existence of the unique feasible interior equilibrium point $E_{*}$ implies three possibilities: (i) $E_1$ exists and $E_2$ does not exist, (ii) $E_2$ exists and $E_1$ does not exist, (iii) existence of both.

\section{Local stability and bifurcation}\label{localstability}
 The Jacobian matrix $J(x)$ of the system (\ref{eq2}) at any point $ x=(x_1, x_2,x_3)$ is given by
  \begin{equation}\label{3.1}
 J(x)_{3\times3}=\left(
  \begin{matrix}
    \frac{r k^2}{(x_1+k)^{2}}-\frac{c_{1} x_2(a_{1}+b_{1}x_2)}{(a_1+x_1+b_{1} x_2)^2}-\frac{c_{2}x_3(a_2+b_{2}x_3)}{(a_{2}+x_1+b_{2}x_3)^{2}} &
    -\frac{c_{1} x_1 (a_{1}+x_1)}{(a_{1}+x_1+b_{1} x_2)^2} & -\frac{c_{2} x_1 (a_{2}+x_1)}{(a_2+x_1+b_{2} x_3)^2} \\
    \frac{c_{1}e_{1}x_2(a_{1}+b_{1} x_2)}{(a_{1}+x_1+b_{1} x_2)^2} & -\delta_1 + \frac{c_{1}e_{1}x_1(a_1+x_1)}{(a_{1}+x_1+b_{1} x_2)^2} & 0 \\
    \frac{c_{2}e_{2} x_3(a_2+b_{2} x_3)}{(a_{2}+x_1+b_{2} x_3)^2} & 0 & -\delta_2 + \frac{c_{2}e_{2} x_1(a_2+ x_1)}{(a_{2}+x_1+b_{2} x_3)^2}
  \end{matrix}
  \right).
  \end{equation}
Its characteristic equation is $\Delta(\lambda) = \lambda^3+k_{1}\lambda^2+k_{2}\lambda+k_{3}=0$,
where $k_{1} = -\text{tr}(J)$, $k_{2} = M$ and $k_{3} = -\det(J)$; $M$ being the sum
of the principal minors of order two of $J.$

Note that the conditions for occurrence of Hopf bifurcation are that there exists a
certain bifurcation parameter $r =r _{c}$ such that
$C_{2}(r _{c})=k _{1}(r _{c})k_{2}(r _{c})-k_{3}(r _{c})=0$ with $k_{2}(r_{c})> 0$
and $\frac{d}{dr}{(\mbox{Re}(\lambda(r)))}|_{r =r_{c}} \neq 0,$
where $\lambda$ is root of the characteristic equation $\Delta(\lambda)=0$.

  \subsection{Local analysis of the system around $E_0,~ E_{1},~ E_2$}
  {\bf Stability:}
 The eigenvalues of the Jacobian matrix $J(E_0)$ are $r, -\delta_1$ and $-\delta_2$. Hence $E_{0}$ is unstable in nature (saddle point). Let $J(E_1) =(\xi_{ij})_{3\times 3}$ and $J(E_2) =(\eta_{ij})_{3\times 3}.$ Using the Routh–Hurwitz
criterion, it can be easily shown that the eigenvalues of the matrices $J(E_1)$ and $J(E_2)$ will have negative real parts iff the conditions $e_1 x_{1_1} + x_{2_1}>\frac{k(1-b_1 e_1)-a_1}{b_1}$ and  $e_2 x_{1_2} + x_{3_2}>\frac{k(1-b_2 e_2)-a_2}{b_2}$ respectively.  Hence the equilibria $E_{1}$ and $E_2$ are locally asymptotically stable under the conditions $e_1 x_{1_1} + x_{2_1}>\frac{k(1-b_1 e_1)-a_1}{b_1}$ and $e_2 x_{1_2} + x_{3_2}>\frac{k(1-b_2 e_2)-a_2}{b_2}$ respectively (cf. Section 4 of of Sarwardi et al. \cite{Sarwardirefuge}).

  {\bf Bifurcation:}
Since the equilibrium point $E_0$ is a saddle in nature, hence, there is no question of Hopf bifurcation around this equilibrium. In order to have Hopf bifurcation around the equilibria $E_1$, $E_2$, it is sufficient to show that the coefficient of $\lambda$ in the quadratic factor of the characteristic polynomial of $J(E_k)$ $(k= 1, 2)$ is zero and the constant term is positive. The conditions for which annihilation of the linear terms in the quadratic factors of the characteristic polynomials of $J(E_1)$ and $J(E_2)$ can be made possible are ~$\xi_{11}+\xi_{22}=0$ and $\eta_{11}+\eta_{33}=0.$ For detailed analysis, interested readers are referred to Appendix A of Haque and Venturion \cite{HV06}. The parametric regions where Hopf bifurcations occur around $E_1$ and $E_2$ are respectively established by the equality constraints  $e_1 x_{1_1} + x_{2_1} = \frac{k(1-b_1 e_1)-a_1}{b_1}$ and  $e_2 x_{1_2} + x_{3_2}=\frac{k(1-b_2 e_2)-a_2}{b_2}$.

  \subsection{Local analysis of the system around the interior equilibrium}
  \begin{proposition}  The system (\ref{eq2}) around $E_{*}$ is
locally asymptotically stable if the condition (i) $k< \min{\{a_1+b_1 x_{2*},~ a_2+b_2 x_{3*}\}}$ is satisfied.\label{prop3}\end{proposition}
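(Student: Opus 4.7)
The plan is to apply the Routh--Hurwitz criterion to the characteristic polynomial $\Delta(\lambda) = \lambda^3 + k_1\lambda^2 + k_2\lambda + k_3$ of $J(E_*)$ from \eqref{3.1}. Writing $A_1 = a_1+x_{1*}+b_1 x_{2*}$ and $A_2 = a_2+x_{1*}+b_2 x_{3*}$, I would first use the equilibrium identities $\delta_1 = c_1 e_1 x_{1*}/A_1$ and $\delta_2 = c_2 e_2 x_{1*}/A_2$ to collapse the two lower diagonal entries to $J_{22} = -c_1 e_1 x_{1*} b_1 x_{2*}/A_1^2$ and $J_{33} = -c_2 e_2 x_{1*} b_2 x_{3*}/A_2^2$, both strictly negative. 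Cancelling $x_{1*}$ in $f_1(E_*) = 0$ gives
\begin{equation*}
\frac{rk}{x_{1*}+k} \;=\; \frac{c_1 x_{2*}}{A_1} + \frac{c_2 x_{3*}}{A_2},
\end{equation*}
and multiplying both sides by $k/(x_{1*}+k)$ lets me rewrite the $(1,1)$ entry as
\begin{equation*}
J_{11} \;=\; \frac{c_1 x_{2*}}{A_1}\!\left[\frac{k}{x_{1*}+k} - \frac{a_1+b_1 x_{2*}}{A_1}\right] + \frac{c_2 x_{3*}}{A_2}\!\left[\frac{k}{x_{1*}+k} - \frac{a_2+b_2 x_{3*}}{A_2}\right].
\end{equation*}
A one-line cross-multiplication shows the first bracket is negative exactly when $k < a_1+b_1 x_{2*}$ and the second exactly when $k < a_2+b_2 x_{3*}$, so the stated hypothesis forces $J_{11} < 0$.

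From \eqref{3.1} I also read off the off-diagonal sign pattern $J_{12},J_{13}<0$, $J_{21},J_{31}>0$, and $J_{23}=J_{32}=0$. With the three diagonal entries negative, $k_1 = -\mathrm{tr}(J) > 0$ is immediate. The $2\times 2$ principal minor sum unpacks as $k_2 = (J_{11}J_{22}-J_{12}J_{21}) + (J_{11}J_{33}-J_{13}J_{31}) + J_{22}J_{33}$; each parenthesised term is a product of two negatives minus a product of factors of opposite sign, so $k_2>0$. Expanding the determinant along the last two rows (most entries zero) gives $\det J = J_{33}(J_{11}J_{22}-J_{12}J_{21}) - J_{22}J_{13}J_{31}$, a sum of two negative terms, so $k_3 = -\det J > 0$.

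The remaining and most delicate Routh--Hurwitz inequality is $k_1 k_2 > k_3$. I would expand $-\mathrm{tr}(J)\cdot k_2 + \det J$ directly; the zeros $J_{23}=J_{32}=0$ kill most cross terms, and the mixed triples $J_{33}J_{12}J_{21}$ and $J_{22}J_{13}J_{31}$ coming from $-\mathrm{tr}(J)\cdot k_2$ cancel exactly against the off-diagonal contributions in $\det J$. After this collapse I expect
\begin{align*}
k_1 k_2 - k_3 \;=\;& -J_{11}^2(J_{22}+J_{33}) - J_{22}^2(J_{11}+J_{33}) - J_{33}^2(J_{11}+J_{22}) \\
& - 2J_{11}J_{22}J_{33} + (J_{11}+J_{22})J_{12}J_{21} + (J_{11}+J_{33})J_{13}J_{31},
\end{align*}
in which every summand is positive under the sign analysis of the previous paragraph; hence $k_1 k_2 - k_3 > 0$ and $E_*$ is locally asymptotically stable by Routh--Hurwitz. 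The main obstacle is the bookkeeping in this last expansion rather than any conceptual difficulty: the real work is noticing that the block structure $J_{23}=J_{32}=0$ produces the clean cancellations above, after which the whole stability conclusion rides on the single inequality $J_{11}<0$, which is precisely the hypothesis translated back via the equilibrium identities.
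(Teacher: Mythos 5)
Your proposal is correct and follows essentially the same route as the paper: show that the hypothesis $k<\min\{a_1+b_1x_{2*},\,a_2+b_2x_{3*}\}$ forces $J_{11}<0$, read off the sign pattern of the remaining entries of $J(E_*)$ (with $J_{23}=J_{32}=0$), and conclude via Routh--Hurwitz that $k_1>0$, $k_3>0$ and $k_1k_2-k_3>0$. Your version is merely more explicit than the paper's (which states the condensed form of $C_2=k_1k_2-k_3$ and asserts its positivity), and your expanded expression for $k_1k_2-k_3$ is algebraically correct, so the extra bookkeeping only strengthens the write-up.
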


 {\textbf{Proof.}}
  Let $J(x_*)$ = ${(J_{ij})}_{3\times 3}$ is the Jacobian matrix at the interior equilibrium point $E_* =x_*$ of the system (\ref{eq2}). The components of $J(x_*)$ are $J_{11}=\frac{c_1 x_{1*} x_{2*}\bigl(k-(a_1+b_1 x_2)\bigr)}{(x_{1*}+k)(a_1+x_{1*} +b_1 x_{2*})} +\frac{c_2 x_{1*} x_{3*}\bigl(k-(a_2+b_2 x_3)\bigr)}{(x_{1*}+k)(a_2+x_{1*} +b_2 x_{3*})}$,
$J_{12}=-\frac{c_1 x_{1*}(a_1+x_{1*})}{(a_1 + x_{1*} + b_1 x_{2*})^2} < 0$, $J_{13}=-\frac{c_2 x_{1*}(a_2+x_{1*})}{(a_2 + x_{1*} + b_2 x_{3*})^2} < 0$, $J_{21}=\frac{c_1e_1 x_{2*}(a_1+b_1 x_{2*})}{(a_1 + x_{1*} + b_1 x_{2*})^2} > 0$, $J_{22}= -\frac{b_1c_1e_1 x_{1*} x_{2*}}{(a_1 + x_{1*} + b_1 x_{2*})^2} < 0$, $J_{23}=0$, $J_{31}=\frac{c_2e_2 x_{3*}(a_2+b_2 x_{3*})}{(a_2 + x_{1*} + b_2 x_{3*})^2} > 0$,  $J_{32}=0$,~$J_{33}= -\frac{b_2 c_2 e_2 x_{1*} x_{3*}}{(a_2 + x_{1*} + b_2 x_{3*})^2} < 0.$

Then the characteristic equation of the Jacobian matrix $J(x_*)$ can be written as
\begin{equation}\label{3.2h}
\lambda^3+k_1\lambda^2+k_2\lambda+k_3=0,
\end{equation}
where $k_1=-\mbox{tr}(J)=-(J_{11}+J_{22}+J_{33})$,
$k_2=M_{11}+M_{22}+M_{33}=(J_{11}J_{22}-J_{21}J_{12})+J_{22}J_{33}+(J_{11}J_{33}-J_{31}J_{13})$,
$k_3=-\det{(J)} =
-\bigl(J_{11}J_{22}J_{33}-J_{12}J_{21}J_{33}-J_{31}J_{13}J_{22}\bigr)$,
\mbox{and} ~$C_{2} = k_1 k_2 -k_3 = -(J_{11} +J_{22})\bigl(J_{33}(J_{11}+J_{22}+J_{33})+(J_{11}J_{22}-J_{21}J_{12})\bigr) +J_{13}J_{31}(J_{11} +J_{33}).$

It is clear that $k_{1} > 0$ if $J_{11}<0,$ i.e., $k< \min{\{a_1+b_1 x_{2*},~ a_2+b_2 x_{3*}\}}$ and consequently $C_{2} > 0.$ Hence the Routh-Hurwitz condition is satisfied for the matrix $J_{*}$, i.e., all the characteristic roots of $J_{*}$ are with negative real parts. So the system is locally asymptotically stable around $E_{*}$.

\begin{theorem}\label{theorem1}The dynamical system  (\ref{eq2}) undergoes Hopf bifurcation around the interior equilibrium point $E_{*}$ whenever the critical parameter value $r = r_{c}$ contained in the domain
 \begin{eqnarray}D_{HB} = \Bigl\{r_{c}\in {\mathbf{R}}^{+} :
C_{2}(r_c)=k_{1}(r_c)k_{2}(r_c)-k_{3}(r_c)=0 ~\mbox{with}~k_{2}(r_c) >0 ~\mbox{and}~ \frac{d C_{2}}{dr }|_{r =r_{c}}\neq 0\Bigr\}.\nonumber\end{eqnarray}
\end{theorem}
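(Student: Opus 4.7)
The plan is to verify the two standard hypotheses of the Hopf bifurcation theorem for three-dimensional smooth vector fields at $r=r_c$: the Jacobian $J(E_*)$ must possess a simple pair of purely imaginary eigenvalues together with a third eigenvalue of nonzero real part, and the complex pair must cross the imaginary axis transversally as $r$ varies. Both pieces of information will be read off algebraically from the characteristic polynomial $\Delta(\lambda)=\lambda^3+k_1\lambda^2+k_2\lambda+k_3$ constructed in Proposition~\ref{prop3}, and the three conditions defining $D_{HB}$ are precisely what is required.

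First I would use the algebraic identity $C_2(r_c)=k_1 k_2-k_3=0$ to factor the cubic as $\Delta(\lambda)=(\lambda+k_1)(\lambda^2+k_2)$, so the spectrum of $J(E_*)$ at $r=r_c$ becomes $\{-k_1(r_c),\,\pm i\sqrt{k_2(r_c)}\}$; the hypothesis $k_2(r_c)>0$ makes the non-real roots genuinely imaginary. A small caveat must be addressed: one also needs $k_1(r_c)\neq 0$ for the pair of imaginary roots to be simple, but $k_1(r_c)=0$ combined with $C_2(r_c)=0$ would force $k_3(r_c)=0$ and hence a zero eigenvalue, which is excluded by the implicit hyperbolicity of $E_*$ away from the bifurcation curve.

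Next I would verify transversality. Writing the perturbed complex branch as $\lambda(r)=\alpha(r)+i\beta(r)$ with $\alpha(r_c)=0$ and $\beta(r_c)=\omega_0:=\sqrt{k_2(r_c)}$, substituting into $\Delta(\lambda(r))=0$, separating real and imaginary parts, differentiating once with respect to $r$ and evaluating at $r=r_c$, I obtain a $2\times 2$ linear system in $\alpha'(r_c)$ and $\beta'(r_c)$. Eliminating $\beta'(r_c)$ and using $\omega_0^{2}=k_2(r_c)$, a short calculation gives
\[
\frac{d}{dr}\bigl(\mbox{Re}(\lambda(r))\bigr)\Big|_{r=r_c}=\alpha'(r_c)=-\frac{C_2'(r_c)}{2\bigl(k_1(r_c)^{2}+k_2(r_c)\bigr)}.
\]
Since the denominator is strictly positive, the Hopf transversality $\alpha'(r_c)\neq 0$ collapses to exactly the assumption $\frac{dC_2}{dr}\big|_{r_c}\neq 0$ built into $D_{HB}$. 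The classical Hopf theorem then delivers a one-parameter family of small-amplitude periodic orbits bifurcating from $E_*$ at $r=r_c$.

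The main obstacle is ordering the implicit-differentiation step correctly: the computation is mechanical but the clean factor $C_2'(r_c)$ only emerges after $\beta'(r_c)$ has been eliminated and $\omega_0^{2}=k_2(r_c)$ has been substituted; performing the evaluations in the wrong order produces a much messier expression that disguises the cancellation. The only genuinely conceptual subtlety is confirming $k_1(r_c)\neq 0$ so that a standard codimension-one Hopf theorem applies rather than a higher-codimension analysis.
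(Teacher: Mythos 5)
Your proposal follows essentially the same route as the paper: the condition $C_2(r_c)=k_1k_2-k_3=0$ factors the characteristic polynomial as $(\lambda+k_1)(\lambda^2+k_2)$, giving the purely imaginary pair $\pm i\sqrt{k_2(r_c)}$ together with the real root $-k_1$, and implicit differentiation of the characteristic equation produces exactly the paper's expression $\frac{d}{dr}\mathrm{Re}\,\lambda(r)\big|_{r=r_c}=-\frac{C_2'(r_c)}{2\bigl(k_1^2+k_2\bigr)}$, so transversality reduces precisely to $\frac{dC_2}{dr}\big|_{r=r_c}\neq 0$. Your additional remark that $k_1(r_c)\neq 0$ must hold to exclude a zero eigenvalue is a small point the paper leaves implicit, but otherwise the two arguments coincide.
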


{\textbf{Proof.}}
The equation (\ref{3.2h}) will have a pair of purely imaginary
roots if $k_1k_2-k_3=0$ for some set of values of the system
parameters. Let us now suppose that $r=r_c$ be the value of $r$
satisfying the condition $k_1k_2-k_3=0$. Here only $J_{11}$ contains
$r$ explicitly. So, we write the equation $k_1k_2-k_3=0$ as an
equation in $J_{11}$ to find $r_c$ as follows:
\begin{equation}\label{3.8}
h_1J_{11}^2+h_2J_{11}+h_3=0,
\end{equation}
where $h_1=J_{22}+J_{33}$, ~$h_2=-J^2_{22}+J^2_{33}-J_{13}J_{31}-J_{12}J_{21}$, ~$h_3=(J_{22}+J_{33})J_{22}J_{33}-J_{13}J_{31}J_{33}-J_{12}J_{21}J_{22}$.

Thus, $J_{11}=\frac{1}{2h_1} (-h_2\pm
\sqrt{h^2_2-4h_1h_3})=J^*_{11}.$

Or,\begin{equation}\label{3.11} r=\frac{(x_{1*}+k)^{2}}{
k^2}\left[J^*_{11}+\frac{c_{1}
x_{2*}(a_{1}+b_{1}x_{2*})}{(a_1+x_{1*}+b_{1}
x_{2*})^2}-\frac{c_{2}x_{3*}(a_2+b_{2}x_{3*})}{(a_{2}+x_{1*}+b_{2}x_{3*})^{2}}\right]=r_c.
\end{equation}

Using the condition $k_1k_2-k_3=0,$ from equation (\ref{3.2h}) one can obtain
\begin{eqnarray} (\lambda+k_{1})(\lambda ^2+k_{2})=0,
\label{3.3h} \end{eqnarray} which has three roots $\lambda _{1}=+i\sqrt k_{2},$ ~ $\lambda_{2}=-i\sqrt k_{2},$ ~$\lambda _{3}=-k_{1},$ so there is a pair of purely imaginary eigenvalues $\pm i\sqrt k_{2}$.
For all values of $\lambda$, the roots are, in general, of the form $\lambda_{1}(r)=\xi _{1}(r)+i\xi _{2}(r ), ~ \lambda_{2}(r)=\xi_{1}(r )-i\xi_{2} (r),~ \lambda_{3}(r )=-k_{1}(r).$

Differentiating the characteristic equation (\ref{3.2h}) w.r.t. $r$, we have
\begin{eqnarray}
\frac{d \lambda}{d r} &=& -\frac{\lambda^2 \dot{k}_{1} + \lambda\dot{k}_{2}+\dot{k}_{3}}{3\lambda^2 + 2k_{1}\lambda +k_{2}}{\mid}_{\lambda = i \sqrt{k_2}}\nonumber\\
&=& \frac{\dot{k}_{3}-k_2 \dot{k}_{1} + i\dot{k}_{2}\sqrt{k_2}}{2(k_2 - ik_{1} \sqrt{k_2})}\nonumber\\
&=& \frac{\dot{k}_{3}-(\dot{k}_{1} k_2 +k_1 \dot{k}_{2})}{2(k_{1}^{2} +k_{2})}+i\frac{\sqrt{k_2}(k_1\dot{k}_{3}+k_2\dot{k}_{2} -k_1 \dot{k}_{1}k_2)}{2k_2(k_{1}^{2} +k_{2})}\nonumber\\&
=&-\frac{\frac{dC_2}{dr}}{2(k_{1}^{2} +k_{2})}+i\Bigl[\frac{\sqrt{k_2}\dot{k}_{2}}{2k_2} -\frac{k_1\sqrt{k_2}\frac{dC_2}{d r}}{2k_2(k_{1}^{2} +k_{2})}\Bigr].\label{ReIm}\hbox{~~~~~~~~~~~~~~~~~~~~~~~~~~}
\end{eqnarray}
Hence,
\begin{eqnarray}\frac{d}{dr}{\bigl(\mbox{Re}(\lambda(r))\bigr)}{\mid}_{r =
r_{c}}&=& -\frac{\frac{dC_2}{dr}}{2(k_{1}^{2} +k_{2})}{\mid}_{r =
r_{c}} \neq 0.\label{Re}
\end{eqnarray}

Using the monotonicity condition of the real part of the complex root $\frac{\mbox{d}{(\mbox{Re}(\lambda(r)))}}{\mbox{d}r }{\mid}_{r =r_{c}} \neq 0$ (cf. Wiggins \cite{wiggins}, pp. 380), one can easily establish the transversality condition $\frac{dC_2}{dr}|_{r =r_{c}}\neq 0,$ to ensure the existence of Hopf bifurcation around $E_{*}.$

\section{Global analysis of the system around the interior equilibrium}\label{Direction}
\subsection{Direction of Hopf bifucation of the system (\ref{eq2}) around  $E_{*}$}\label{subcriticalH}
In this Section we study on  the direction of Hopf bifucation around the interior equilibrium.
From the model equations (\ref{eq2}), we have
\begin{equation}\label{4.1}
    \dot{x}={f}(x),
\end{equation}
where ${x}=(x_1,x_2,x_3)^t$, ${f} =(f^1,f^2,f^3)^t=
   \left(\begin{array}{c}
    rx_1(1-\frac{x_1}{x_1+k})-\frac{c_{1}x_1 x_2}{a_{1}+x_1+b_{1}x_2}
                  -\frac{c_{2}x_1x_3}{a_{2}+x_1+b_{2}x_3} \nonumber\\
    -\delta_{1}x_2+\frac{c_{1}e_{1}x_1x_2}{a_{1}+x_1+b_{1}x_2}\nonumber\\
    -\delta_{2}x_3+\frac{c_{2}e_{2}x_1x_3}{a_{2}+x_1+b_{2}x_3} \\
  \end{array}\right)$.
Here, at $x=x_*$, $f=0$. Let ${y}= (y_1, y_2,
y_3)={(x_1-x_1*,~x_2-x_2*,~x_3-x_3*)}$. Putting in equation (\ref{4.1}), we have
\begin{equation}\label{4.2}
\dot{y}=J(x_{1*}, x_{2*},x_{3*}){y}+ \phi,
\end{equation}
where the components of nonlinear vector function $\phi=(\phi_1,\phi_2, \phi_3)^t$ are given by
\begin{equation}\label{4.6}
\phi_{i}=f^i_{x_1x_1}y^2_1+f^i_{x_2x_2}y^2_2+f^i_{x_3x_3}y^2_3+2f^i_{x_2x_3}y_2y_3+2f^i_{x_3x_1}y_3y_1+2f^i_{x_1x_2}y_1y_2+\mbox{h.o.t.},~
i=1,2,3.
\end{equation}

The coefficients of nonlinear terms in $y_{i},$ $i = 1,2,3$ are given by

\begin{math}
 f^{1}_{x_1x_1}=-\frac{2rk^2}{(x_1+k)^3}+\frac{2c_1x_2(a_1+b_1x_2)}{(a_1+x_1+b_{1} x_2)^3}
 +\frac{2c_2x_3(a_2+b_2x_3)}{(a_{2}+x_1+b_{2}x_3)^3},~ f^{1}_{x_2x_2}=\frac{2b_1c_1x_1(a_1+x_1)}{(a_1+x_1+b_{1}
 x_2)^3},~f^{1}_{x_3x_3}=\frac{2b_2c_2x_1(a_2+x_1)}{(a_2+x_1+b_{2}
 x_3)^3},\\
 f^{1}_{x_2 x_3}=0,~ f^{1}_{x_1 x_2}=-\frac{c_1 a_1(a_1 + x_1+b_1x_2)+ 2b_1c_1 x_1 x_2}{(a_1+x_1+b_{1}
 x_2)^3},~ f^{1}_{x_3x_1}=-\frac{c_2 a_2(a_2 + x_1+b_2x_3)+ 2b_2c_2 x_3 x_1}{(a_2+x_1+b_{2}
 x_3)^3};
  \end{math}
\begin{math}
f^{2}_{x_1x_1}= -\frac{2c_{1}e_{1}x_2(a_{1}+b_{1}
x_2)}{(a_{1}+x_1+b_{1} x_2)^3},~f^{2}_{x_2x_2}=-\frac{2b_1c_1e_1x_1(a_1+x_1)}{(a_1+x_1+b_{1}x_2)^3},~f^{2}_{x_3x_3}=0, ~
f^{2}_{x_1x_2}=\frac{a_1 c_1e_1(a_1+x_1+b_1x_2)+2b_1 c_1 e_1x_1 x_2}{(a_1+x_1+b_1x_2)^3},\\
f^{2}_{x_2x_3}=0,~f^{2}_{x_3x_1}=0;
\end{math}
\begin{math}
 f^{3}_{x_1x_1}=-\frac{2c_2e_2x_3(a_2+b_2x_3)}{(a_{2}+x_1+b_{2}
 x_3)^3},~ f^{3}_{x_2x_2}=0,~ f^{3}_{x_3x_3}=-\frac{2b_2c_2e_2x_1(a_2+x_1)}{(a_2+x_1+b_2x_3)^3},\\
 f^{3}_{x_2x_3}=0,~ f^{3}_{x_3x_1}=\frac{a_2c_2e_2(a_2+x_1+b_2x_3)+2b_2c_2e_2 x_1 x_3}{(a_2+x_1+b_2x_3)^3},~
 f^{3}_{x_1x_2}=0.
\end{math}

 Let $P$ be the matrix formed by the column vectors
$({\bf u_2, u_1,u_3}),$ which are the eigenvectors corresponding to the
eigenvalues $\lambda_{1,2}=\pm i\sqrt{k_2}$ and $\lambda_3=-k_1$
of $J(x_{1*}, x_{2*},x_{3*})$, then $J(x_{1*}, x_{2*},x_{3*}){\bf u_2}=i\sqrt{k_2}{\bf u_2},$~
$J(x_{1*},x_{2*},x_{3*}){\bf u_1}=-i\sqrt{k_2}{\bf u_1},$ and
$J(x_{1*}, x_{2*},x_{3*}){\bf u_3}=-k_1{\bf u_3}.$

Thus, $$P=\left(
\begin{matrix}
0   &  1 &  1\\
\frac{-J_{21}\sqrt{k_2}}{J^{2}_{22}+k_2} & \frac{-J_{21}J_{22}}{J^{2}_{22}+k_2}  & \frac{-J_{21}}{J_{22}+k_1}\\
\frac{-J_{31}\sqrt{k_2}}{J^{2}_{33}+k_2} &
\frac{-J_{31}J_{33}}{J^{2}_{33}+k_2}  &
\frac{-J_{31}}{J_{33}+k_1}\\
\end{matrix}\right)=(p_{ij})_{3\times3}.
$$

Let us make use of the transformation ${\bf y}=P{\bf z},$ so as the system
(\ref{4.2}) is reduced to the following one
\begin{equation}\label{4.10}
\dot{\bf z}=P^{-1}J(x_{1*}, x_{2*},x_{3*})P{\bf z}
+P^{-1}\phi=\left(
\begin{matrix}
0          & -i\sqrt{k_2} & 0\\
i\sqrt{k_2} & 0           & 0\\
0          & 0           & -k_1 \end{matrix}\right){\bf
z}+P^{-1}\phi.
\end{equation}
Here $P^{-1}=\frac{\mbox{Adj}P}{\det P}=(q_{ij})_{3\times3},$ where

 $q_{11}=\frac{1}{\det
P}(\frac{J_{21}J_{22}J_{31}}{(J^{2}_{22}+k_2)(J_{33}+k_1)}-
\frac{J_{21}J_{33}J_{31}}{(J^{2}_{22}+k_2)(J_{22}+k_1)}),~ q_{12}=\frac{1}{\det
P}( \frac{J_{31}}{J^{2}_{33}+k_1}- \frac{J_{31}J_{33}}{J^{2}_{33}+k_2}),\\
q_{13}=\frac{1}{\det
P}(\frac{-J_{21}}{J_{21}+k_1}+\frac{J_{21}J_{22}}{J^{2}_{22}+k_2}),~q_{21}=\frac{1}{\det
P}(\frac{J_{21}\sqrt{k_2}J_{31}}{(J^{2}_{22}+k_2)(J_{22}+k_1)}-
\frac{J_{21}\sqrt{k_2}J_{31}}{(J^{2}_{33}+k_2)(J_{22}+k_1)}),\\
q_{22}=\frac{1}{\det
P}\frac{\sqrt{k_2}J_{31}}{(J^{2}_{33}+k_2)},~ q_{23}=\frac{1}{\det P}\frac{-\sqrt{k_2}J_{31}}{(J^{2}_{33}+k_2)},~
q_{31}=\frac{1}{\det
P}(\frac{J_{21}\sqrt{k_2}J_{31}J_{33}}{(J^{2}_{22}+k_2)(J^{2}_{33}+k_2)}-
\frac{J_{21}\sqrt{k_2}J_{31}J_{22}}{(J^{2}_{33}+k_2)(J^{2}_{22}+k_2)}),\\
q_{32}=\frac{1}{\det
P}\frac{-\sqrt{k_2}J_{21}}{(J^{2}_{22}+k_2)},~ q_{33}=\frac{1}{\det P}\frac{\sqrt{k_2}J_{21}}{(J^{2}_{22}+k_2)}.$

The system (\ref{4.10}) can be written as
\begin{eqnarray}
    \frac{d}{dt}\left(\begin{array}{c}
      z_1 \\
      z_2 \\
    \end{array}\right)&=&\left(
\begin{matrix}
0          & -\sqrt{k_2} \\
\sqrt{k_2} & 0
\end{matrix}\right)\left(\begin{array}{c}
      z_1 \\
      z_2 \\
    \end{array}\right)+F(z_1,z_2,z_3),\label{4.15}\\
 \frac{dz_3}{dt}&=&-k_1z_3+G(z_1,z_2,z_3)\label{4.16}.
\end{eqnarray}
On the center-manifold (cf. Carr \cite{Carr}, Kar \cite{Kar})
\begin{equation}\label{4.21}
z_3=\frac{1}{2}\left(b_{11}z^2_1+2b_{12}z_1z_2+b_{22}z^2_2\right)
\end{equation}
Therefore, \begin{equation}\label{4.22}
\dot{z}_3=\left(\begin{array}{cc}
b_{11} z_1+b_{12}z_2 &  b_{12}z_1+b_{22}z_2 \\
\end{array}\right)\left(
\begin{matrix}
0          & -\sqrt{k_2} \\
\sqrt{k_2} & 0            \end{matrix}\right) \left(
  \begin{matrix}
  z_1 \\
  z_2
  \end{matrix}
  \right)\\
  =\sqrt{k_2} b_{12} z^2_1+\sqrt{k_2}(b_{22}-b_{11})z_1z_2-\sqrt{k_2}
  b_{12}z^2_2
\end{equation}
Using (\ref{4.10}) and (\ref{4.16}), we have

\begin{equation}\label{4.26}
\dot{z}_3=-k_1z_3+q_{31}\phi_1+q_{32}\phi_2+q_{33}\phi_3
\end{equation}
From the equations (\ref{4.22}) and (\ref{4.26}), we have
\begin{eqnarray}&~&\sqrt{k_2} b_{12} z^2_1+\sqrt{k_2}(b_{22}-b_{11})z_1z_2-\sqrt{k_2}
  b_{12}z^2_2\nonumber\\
  &~& = -\frac{1}{2}k_1(b_{11}z^2_1+2b_{12}z_1z_2+b_{22}z^2_2)
+q_{31} \bigl[f^1_{x_1x_1}\{p_{11}z_1+p_{12}z_2+p_{13}\frac{1}{2}(b_{11}z^2_1+2b_{12}z_1z_2+b_{22}z^2_2)\}^2\nonumber\\
&~&  +f^1_{x_2x_2}\{p_{21}z_1+p_{22}z_2+p_{23}\frac{1}{2}(b_{11}z^2_1+2b_{12}z_1z_2+b_{22}z^2_2)\}^2
+f^1_{x_3x_3}\{p_{31}z_1+p_{32}z_2+p_{33}\frac{1}{2}(b_{11}z^2_1\nonumber \\ &~&+2b_{12}z_1z_2+b_{22}z^2_2)\}^2
         +2f^1_{x_3x_1}\{p_{31}z_1+p_{32}z_2+p_{33}\frac{1}{2}(b_{11}z^2_1+2b_{12}z_1z_2+b_{22}z^2_2)\}\{p_{11}z_1+p_{12}z_2
         \nonumber\\ &~& +p_{13}\frac{1}{2}(b_{11}z^2_1+2b_{12}z_1z_2+b_{22}z^2_2)\}
         +2f^1_{x_1x_2}\{p_{11}z_1+p_{12}z_2+p_{13}\frac{1}{2}(b_{11}z^2_1+2b_{12}z_1z_2+b_{22}z^2_2)\}\nonumber\\&~&\times \{p_{21}z_1
         +p_{22}z_2+p_{23}\frac{1}{2}(b_{11}z^2_1+2b_{12}z_1z_2+b_{22}z^2_2)\}\bigr]\nonumber\\
         &~&+q_{32}[f^2_{x_1x_1}\{p_{11}z_1+p_{12}z_2+p_{13}\frac{1}{2}(b_{11}z^2_1+2b_{12}z_1z_2+b_{22}z^2_2)\}^2
           +f^2_{x_2x_2}\{p_{21}z_1+p_{22}z_2\nonumber\\&~& +p_{23}\frac{1}{2}(b_{11}z^2_1+2b_{12}z_1z_2+b_{22}z^2_2)\}^2
         +f^2_{x_3x_3}\{p_{31}z_1+p_{32}z_2+p_{33}\frac{1}{2}(b_{11}z^2_1+2b_{12}z_1z_2+b_{22}z^2_2)\}^2\nonumber\\&~&+2f^2_{x_2x_3}\{p_{21}z_1+p_{22}z_2
         +p_{23}\frac{1}{2}(b_{11}z^2_1+2b_{12}z_1z_2+b_{22}z^2_2)\}\{p_{31}z_1+p_{32}z_2+p_{33}\frac{1}{2}(b_{11}z^2_1\nonumber\\&~&+2b_{12}z_1z_2+b_{22}z^2_2)\}
         +2f^2_{x_3x_1}\{p_{31}z_1+p_{32}z_2+p_{33}\frac{1}{2}(b_{11}z^2_1+2b_{12}z_1z_2+b_{22}z^2_2)\}\{p_{11}z_1+p_{12}z_2\nonumber\\&~&+p_{13}\frac{1}{2}(b_{11}z^2_1
         +2b_{12}z_1z_2+b_{22}z^2_2)\}+2f^2_{x_1x_2}\{p_{11}z_1+p_{12}z_2+p_{13}\frac{1}{2}(b_{11}z^2_1+2b_{12}z_1z_2+b_{22}z^2_2)\}\nonumber\\&~&\times
         \{p_{21}z_1+p_{22}z_2+p_{23}\frac{1}{2}(b_{11}z^2_1+2b_{12}z_1z_2+b_{22}z^2_2)\}\bigr]\nonumber\\
&~&+q_{33}\bigl[f^3_{x_1x_1}\{p_{11}z_1+p_{12}z_2+p_{13}\frac{1}{2}(b_{11}z^2_1+2b_{12}z_1z_2+b_{22}z^2_2)\}^2+f^3_{x_2x_2}\{p_{21}z_1+p_{22}z_2
+p_{23}\nonumber\\&~&\times\frac{1}{2}(b_{11}z^2_1+2b_{12}z_1z_2+b_{22}z^2_2)\}^2
         +f^3_{x_3x_3}\{p_{31}z_1+p_{32}z_2+p_{33}\frac{1}{2}(b_{11}z^2_1+2b_{12}z_1z_2+b_{22}z^2_2)\}^2\nonumber\\&~&+2f^3_{x_2x_3}\{p_{21}z_1+p_{22}z_2
         +p_{23}\frac{1}{2}
         (b_{11}z^2_1+2b_{12}z_1z_2+b_{22}z^2_2)\}\{p_{31}z_1+p_{32}z_2+p_{33}\frac{1}{2}(b_{11}z^2_1\nonumber\\&~&+2b_{12}z_1z_2+b_{22}z^2_2)\}
         +2f^3_{x_3x_1}\{p_{31}z_1
         +p_{32}z_2+p_{33}\frac{1}{2}(b_{11}z^2_1+2b_{12}z_1z_2+b_{22}z^2_2)\}
         \{p_{11}z_1\nonumber\\&~&+p_{12}z_2+p_{13}\frac{1}{2}(b_{11}z^2_1+2b_{12}z_1z_2+b_{22}z^2_2)\}+2f^3_{x_1x_2}\{p_{11}z_1+p_{12}z_2+p_{13}\frac{1}{2}
         (b_{11}z^2_1\nonumber\\&~&+2b_{12}z_1z_2+b_{22}z^2_2)\}\{p_{21}z_1+p_{22}z_2+p_{23}\frac{1}{2}(b_{11}z^2_1+2b_{12}z_1z_2+b_{22}z^2_2)\}\bigr].\nonumber
         \end{eqnarray}
Comparing the coefficients of $z^2_1$, $z_1z_2$ and $z^2_2$ from
both sides, we have
\begin{eqnarray}\label{4.31}
&~&\sqrt{k_2} b_{12} +\frac{k_1}{2}b_{11}\nonumber\\&~&=q_{31}\left[ f^1_{x_1x_1}
p^2_{11}+f^1_{x_2x_2} p^2_{21}+f^1_{x_3x_3} p^2_{31}+2f^1_{x_2x_3}
p_{21}p_{31}+2f^1_{x_3x_1} p_{31}p_{11}+2f^1_{x_1x_2}
p_{11}p_{21}\right]\nonumber\\
&~&+q_{32}\left[ f^2_{x_1x_1} p^2_{11}+f^2_{x_2x_2}
p^2_{21}+f^2_{x_3x_3} p^2_{31}+2f^2_{x_2x_3}
p_{21}p_{31}+2f^2_{x_3x_1} p_{31}p_{11}+2f^2_{x_1x_2}
p_{11}p_{21}\right]\nonumber\\
&~&+q_{33}\left[ f^3_{x_1x_1} p^2_{11}+f^3_{x_2x_2}
p^2_{21}+f^3_{x_3x_3} p^2_{31}+2f^3_{x_2x_3}
p_{21}p_{31}+2f^3_{x_3x_1} p_{31}p_{11}+2f^3_{x_1x_2}
p_{11}p_{21}\right]\nonumber\\
&~&=\Omega_1,
\end{eqnarray}
\begin{eqnarray}\label{4.32}
&~&-\sqrt{k_2} (b_{11}-b_{22})+k_1 b_{12}\nonumber\\ &~& = 2q_{31}[ f^1_{x_1x_1}
p_{11}p_{12}+f^1_{x_2x_2} p_{21}p_{22}+f^1_{x_3x_3} p_{31}p_{32}+f^1_{x_2x_3} (p_{21}p_{32}+p_{22}p_{31}) +
f^1_{x_3x_1} (p_{11}p_{32}\nonumber\\ &~& +p_{12}p_{31})+f^1_{x_1x_2}(p_{11}p_{22}+p_{12}p_{21})]+q_{32}[f^2_{x_1x_1} p_{11}p_{12}+f^2_{x_2x_2}
p_{21}p_{22}+f^2_{x_3x_3}p_{31}p_{32}  +f^2_{x_2x_3}\nonumber\\ &~&\times (p_{21}p_{32}+p_{22}p_{31}) + f^2_{x_3x_1}
(p_{11}p_{32}+p_{12}p_{31})+f^2_{x_1x_2}(p_{11}p_{22}+p_{12}p_{21})]+q_{33}\left[f^3_{x_1x_1} p_{11}p_{12}\right.\nonumber\\ &~& \left.+f^3_{x_2x_2}
p_{21}p_{22}+f^3_{x_3x_3} p_{31}p_{32}+f^3_{x_2x_3} (p_{21}p_{32}+p_{22}p_{31}) + f^3_{x_3x_1}
(p_{11}p_{32}+p_{12}p_{31})\right.\nonumber \\ &~& \left.+f^3_{x_1x_2}(p_{11}p_{22}+p_{12}p_{21})\right]=\Omega_2,
\end{eqnarray}
and
\begin{eqnarray}\label{4.33}
&~& -\sqrt{k_2} b_{12}+\frac{k_1}{2}b_{22}\nonumber\\&~&=q_{31}\left[ f^1_{x_1x_1}
p^2_{12}+f^1_{x_2x_2} p^2_{22}+f^1_{x_3x_3} p^2_{32}+2f^1_{x_2x_3}
p_{22}p_{32}+2f^1_{x_3x_1} p_{31}p_{12}+2f^1_{x_1x_2}
p_{12}p_{22}\right]\nonumber\\
&~&+q_{32}\left[ f^2_{x_1x_1} p^2_{12}+f^2_{x_2x_2}
p^2_{22}+f^2_{x_3x_3} p^2_{32}+2f^2_{x_2x_3}
p_{22}p_{32}+2f^2_{x_3x_1} p_{31}p_{12}+2f^2_{x_1x_2}
p_{12}p_{22}\right]\nonumber\\
&~&+q_{33}\left[ f^3_{x_1x_1} p^2_{12}+f^3_{x_2x_2}
p^2_{22}+f^3_{x_3x_3} p^2_{32}+2f^3_{x_2x_3}
p_{22}p_{32}+2f^3_{x_3x_1} p_{31}p_{12}+2f^3_{x_1x_2}
p_{12}p_{22}\right]\nonumber\\
&~&=\Omega_3.
\end{eqnarray}
From equations (\ref{4.31}), (\ref{4.32}) and (\ref{4.33}), we
have
\begin{eqnarray}\label{4.35}
\left(%
\begin{array}{ccc}
  \frac{1}{2}k_1 & \sqrt{k_2} & 0 \\
  -\sqrt{k_2} & k_1 & \sqrt{k_2}\\
  0 & -\sqrt{k_2} & \frac{1}{2}k_1 \\
\end{array}
\right)\left(
\begin{array}{c}
  b_{11} \\
  b_{12} \\
  b_{22}
\end{array}
\right)=\left(
\begin{array}{c}
  \Omega_1 \\
  \Omega_2 \\
  \Omega_3
\end{array}
\right).
\end{eqnarray}
 The equation
(\ref{4.35}) gives the coefficients $b_{11}$, $b_{12}$ and
$b_{22}$ as follows:
\begin{eqnarray}
b_{11} &=& \frac{k_2(\Omega_1+\Omega_3) -\frac{k_1}{2}(\sqrt{k_2}\Omega_2 -k_1\Omega_1)}{(\frac{k_1^3}{4} +k_1 k_2)},\nonumber\\
b_{12} &=& \frac{\frac{k_1^2 \Omega_2}{4} -\frac{k_1 \sqrt{k_2}}{2}(\Omega_3 -\Omega_1)}{(\frac{k_1^3}{4} +k_1 k_2)},\nonumber\\
b_{22} &=&\frac{k_2(\Omega_1+\Omega_3) +\frac{k_1^2 \Omega_3}{2} + \frac{k_1\sqrt{k_2}}{2}\Omega_2}{(\frac{k_1^3}{4} +k_1 k_2)}.\nonumber
\end{eqnarray}
The flow of the central manifold is characterized by the reduced system as
\begin{equation}\label{4.40}
    \frac{d}{dt}\left(\begin{array}{c}
      z_1 \\
      z_2
    \end{array}\right)=\left(
\begin{matrix}
0          & -\sqrt{k_2} \\
\sqrt{k_2} & 0
\end{matrix}\right)\left(\begin{array}{c}
      z_1 \\
      z_2
    \end{array}\right)+\left(\begin{array}{c}
      F^1 \\
      F^2
    \end{array}\right),
\end{equation}
where $F^1=q_{11}\phi_1+q_{12}\phi_2+q_{13}\phi_3+h.o.t$,
$F^2=q_{21}\phi_1+q_{22}\phi_2+q_{23}\phi_3+h.o.t$. The stability
of the bifurcating limit cycle can be determined by the sign of
the parametric expression
\begin{eqnarray}\Pi = F^1_{111}+F^2_{112}+F^1_{122}+F^2_{222}+\frac{F^1_{12}(F^1_{11}+F^1_{22})-F^2_{12}(F^2_{11}+F^2_{22})-F^1_{11}F^2_{11}+F^1_{22}F^2_{22}}{\sqrt{k_2}},\label{eqpi}\end{eqnarray}
where $F_{ijk}=\frac{\partial^3 F}{\partial z_i\partial z_j
\partial z_k}$ at the origin.
If the value of the above expression is negative, then the Hopf bifurcating limit cycle is
stable and is called a supercritical Hopf bifurcation. If the value is
positive, then the Hopf bifurcating limit cycle is unstable and
the bifurcation is subcritical.

Here
\begin{eqnarray}F^1_{11}&=&2q_{11}[f^1_{x_1x_1}p^2_{11} + f^1_{x_2x_2}p^2_{21}+ f^1_{x_3x_3}p^2_{31}+2f^1_{x_3x_1}p_{11}p_{31} +2f^1_{x_1x_2}p_{11}p_{21}]+2q_{12}[f^2_{x_1x_1}p^2_{11} + f^2_{x_2x_2}\nonumber\\ &~& \times p^2_{21}+2f^2_{x_1x_2}p_{11}p_{21}]+2q_{13}[f^3_{x_1x_1}p^2_{11} + f^3_{x_3x_3}p^2_{31}+2f^3_{x_3x_1}p_{11}p_{31}],\nonumber \\
 F^1_{12}&=&2q_{11}[f^1_{x_1x_1}p_{11}p_{12} + f^1_{x_2x_2}p_{21}p_{22}+ f^1_{x_3x_3}p_{31}p_{32}+f^1_{x_3x_1}(p_{11}p_{32}+p_{12}p_{31}) +f^1_{x_1x_2}(p_{12}p_{21}+ \nonumber\\ &~&p_{22}p_{11})]+2q_{12}[f^2_{x_1x_1}p_{11}p_{12} + f^2_{x_2x_2}p_{21}p_{22}+ f^2_{x_1x_2}(p_{12}p_{21}+p_{22}p_{11})]
+2q_{13}[f^3_{x_1x_1}p_{11}p_{12} \nonumber\\ &~&+ f^3_{x_3x_3}p_{31}p_{32}+f^3_{x_3x_1}(p_{11}p_{32}+p_{12}p_{31})],\nonumber \\ F^1_{22}&=&2q_{11}[f^1_{x_1x_1}p_{12}^2 + f^1_{x_2x_2}p_{22}^2+ f^1_{x_3x_3}p_{32}^2+ 2f^1_{x_3x_1}p_{12}p_{32} +2f^1_{x_1x_2}p_{12}p_{22}] +2q_{12}[f^2_{x_1x_1}p_{12}^2 \nonumber\\ &~&+ f^2_{x_2x_2}p_{22}^2+ 2f^2_{x_1x_2}p_{12}p_{22}]
+2q_{13}[f^3_{x_1x_1}p_{12}^2 + f^3_{x_3x_3}p_{32}^2+f^3_{x_3x_1}p_{12}p_{32})],\nonumber \\
F^1_{111}&=&6q_{11}b_{11}[f^1_{x_1x_1}p_{11}p_{13}+f^1_{x_2x_2}p_{21}p_{23}+f^1_{x_3x_3}p_{31}p_{33}
+f^1_{x_3x_1}(p_{11}p_{33}+p_{13}p_{31})+f^1_{x_1x_2}(p_{11}p_{23}\nonumber\\ &~&+p_{21}p_{13})]
+6q_{12}b_{11}[f^2_{x_1x_1}p_{11}p_{13}+f^2_{x_2x_2}p_{21}p_{23}+f^2_{x_3x_3}p_{31}p_{33}
+f^2_{x_1x_2}(p_{11}p_{23}+p_{13}p_{21})]
\nonumber\\ &~&+6q_{13}b_{11}[f^3_{x_1x_1}p_{11}p_{13}+f^3_{x_3x_3}p_{31}p_{33}
+f^3_{x_3x_1}(p_{11}p_{33}+p_{13}p_{31})],\nonumber \\ F^1_{122}&=&2q_{11}[f^1_{x_1x_1}(2p_{12}p_{13}b_{12}+p_{11}p_{13}b_{22})+f^1_{x_2x_2}(2p_{22}p_{33}b_{12}+p_{21}p_{23}b_{22})
+f^1_{x_3x_3}(2p_{32}p_{33}b_{12}\nonumber\\ &~& +p_{31}p_{33}b_{22}) + f^1_{x_3x_1}(2p_{13}p_{32}b_{12}+p_{11}p_{33}b_{22}+p_{13}p_{31}b_{22}+2p_{12}p_{33}b_{12})
+ f^1_{x_1x_2}(2p_{12}p_{23}\nonumber \\ &~& \times b_{12}  +p_{13}p_{21}b_{22}+p_{11}p_{23}b_{22}+2p_{22}p_{13}b_{12})]
 +2q_{12}[f^2_{x_1x_1}(2p_{12}p_{13}b_{12}+p_{11}p_{13}b_{22}) +f^2_{x_2x_2}\nonumber\\ &~&\times (2p_{22}p_{23}b_{12}+p_{21}p_{23}b_{22})
+f^2_{x_1x_2}(2p_{12}p_{23}b_{12}+p_{11}p_{23}b_{22}+2p_{13}p_{22}b_{12}+p_{13}p_{21}b_{22})]\nonumber \\
&~& + 2q_{13}[f^3_{x_1x_1}(2p_{13}p_{12}b_{12}+p_{11}p_{13}b_{22})+f^3_{x_3x_3}(2p_{32}p_{33}b_{12}+p_{31}p_{33}b_{22})
+f^3_{x_3x_1}(2p_{32}p_{13}b_{12} \nonumber \\ &~& +p_{31}p_{13}b_{22}+2p_{12}p_{13}b_{12}+p_{33}p_{11}b_{22})], \nonumber\end{eqnarray}

\begin{eqnarray}F^2_{11}&=&2q_{21}[f^1_{x_1x_1}p^2_{11} + f^1_{x_2x_2}p^2_{21}+ f^1_{x_3x_3}p^2_{31}+2f^1_{x_3x_1}p_{11}p_{31} +2f^1_{x_1x_2}p_{11}p_{21}]+2q_{22}[f^2_{x_1x_1}p^2_{11} + f^2_{x_2x_2}\nonumber\\ &~& \times p^2_{21}+2f^2_{x_1x_2}p_{11}p_{21}]+2q_{23}[f^3_{x_1x_1}p^2_{11} + f^3_{x_3x_3}p^2_{31}+2f^3_{x_3x_1}p_{11}p_{31}],\nonumber \\
 F^2_{12}&=&2q_{21}[f^1_{x_1x_1}p_{11}p_{12} + f^1_{x_2x_2}p_{21}p_{22}+ f^1_{x_3x_3}p_{31}p_{32}+f^1_{x_3x_1}(p_{11}p_{32}+p_{12}p_{31}) +f^1_{x_1x_2}(p_{12}p_{21}+ \nonumber\\ &~&p_{22}p_{11})]+2q_{22}[f^2_{x_1x_1}p_{11}p_{12} + f^2_{x_2x_2}p_{21}p_{22}+ f^2_{x_1x_2}(p_{12}p_{21}+p_{22}p_{11})]
+2q_{23}[f^3_{x_1x_1}p_{11}p_{12} \nonumber\\ &~&+ f^3_{x_3x_3}p_{31}p_{32}+f^3_{x_3x_1}(p_{11}p_{32}+p_{12}p_{31})],\nonumber \\ F^2_{22}&=&2q_{21}[f^1_{x_1x_1}p_{12}^2 + f^1_{x_2x_2}p_{22}^2+ f^1_{x_3x_3}p_{32}^2+ 2f^1_{x_3x_1}p_{12}p_{32} +2f^1_{x_1x_2}p_{12}p_{22}] +2q_{22}[f^2_{x_1x_1}p_{12}^2 \nonumber\\ &~&+ f^2_{x_2x_2}p_{22}^2+ 2f^2_{x_1x_2}p_{12}p_{22}]
+2q_{23}[f^3_{x_1x_1}p_{12}^2 + f^3_{x_3x_3}p_{32}^2+f^3_{x_3x_1}p_{12}p_{32})],\nonumber\\
F^2_{112}&=&2q_{21}[f^1_{x_1x_1}(2p_{11}p_{13}b_{12}+p_{12}p_{13}b_{11})+f^1_{x_2x_2}(2p_{21}p_{23}b_{12}+p_{22}p_{23}b_{11})
+f^1_{x_3x_3}(2p_{31}p_{33}b_{12}\nonumber\\ &~& +p_{32}p_{33}b_{11}) + f^1_{x_3x_1}(2p_{11}p_{33}b_{12}+p_{13}p_{32}b_{11}+p_{12}p_{33}b_{11}+2p_{13}p_{31}b_{12})
+ f^1_{x_1x_2}(2p_{13}p_{21}\nonumber \\ &~& \times b_{12}  +p_{12}p_{23}b_{11}+ 2p_{11}p_{23}b_{12}+p_{22}p_{13}b_{11})]
 +2q_{22}[f^2_{x_1x_1}(2p_{11}p_{13}b_{12}+p_{12}p_{13}b_{11}) +f^2_{x_2x_2}\nonumber\\ &~&\times (2p_{21}p_{23}b_{12}+p_{22}p_{23}b_{11})
+f^2_{x_1x_2}(2p_{11}p_{23}b_{12}+p_{12}p_{23}b_{11}+2p_{13}p_{21}b_{12}+p_{13}p_{22}b_{11})]\nonumber \\
&~& + 2q_{23}[f^3_{x_1x_1}(2p_{11}p_{13}b_{12}+p_{12}p_{13}b_{11})+f^3_{x_3x_3}(2p_{31}p_{33}b_{12}+p_{32}p_{33}b_{11})
+f^3_{x_3x_1}(2p_{31}p_{13}b_{12} \nonumber \\ &~& +p_{33}p_{12}b_{11}+2p_{11}p_{33}b_{12}+p_{32}p_{13}b_{11})], \nonumber\\
F^2_{222}&=&6q_{21}b_{22}[f^1_{x_1x_1}p_{12}p_{13}+f^1_{x_2x_2}p_{22}p_{23}+f^1_{x_3x_3}p_{32}p_{33}
+f^1_{x_3x_1}(p_{12}p_{33}+p_{13}p_{32})+f^1_{x_1x_2}(p_{12}p_{23}\nonumber\\ &~&+p_{13}p_{22})]
+6q_{22}b_{22}[f^2_{x_1x_1}p_{12}p_{13}+f^2_{x_2x_2}p_{22}p_{23}+f^2_{x_1x_2}(p_{12}p_{23}+p_{13}p_{22})]
\nonumber\\ &~&+6q_{23}b_{22}[f^3_{x_1x_1}p_{12}p_{13}+f^3_{x_3x_3}p_{32}p_{33}
+f^3_{x_3x_1}(p_{12}p_{33}+p_{13}p_{32})].\nonumber \end{eqnarray}
\subsection{Global stability of the system (\ref{eq2}) around  $E_{*}$}\label{Globalint}
 \begin{theorem}
The interior equilibrium $E_{*}$ is globally asymptotically stable if the condition
\begin{eqnarray}
(\mbox{i})~ a_1 a_2 b_1 b_2 r k> (x_{1*} +k)(w +k)(a_1 b_1 c_2 + a_2 b_2 c_1) ~\hbox{is satisfied}.\nonumber\end{eqnarray}\label{theorem2}\end{theorem}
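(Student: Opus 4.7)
The plan is to establish global asymptotic stability of $E_*$ via a Volterra-type Lyapunov function together with LaSalle's invariance principle, reducing the proof to showing a sign-definite estimate on $\dot V$ that uses the ultimate prey bound $w$ from Proposition~\ref{prop3.1}. I would start from
\[
V(x_1,x_2,x_3) \;=\; \Bigl(x_1 - x_{1*} - x_{1*}\ln\tfrac{x_1}{x_{1*}}\Bigr)
+ \alpha_1\Bigl(x_2 - x_{2*} - x_{2*}\ln\tfrac{x_2}{x_{2*}}\Bigr)
+ \alpha_2\Bigl(x_3 - x_{3*} - x_{3*}\ln\tfrac{x_3}{x_{3*}}\Bigr),
\]
with $\alpha_1,\alpha_2>0$ to be chosen. $V$ is $C^1$, positive definite on $\Omega^0$ relative to $E_*$, and proper on $\bar\Omega$, so it is a legitimate candidate once $\dot V\le 0$ is secured.

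Next I would compute $\dot V$ along trajectories. Setting $A=a_1+x_1+b_1x_2$, $A_*=a_1+x_{1*}+b_1x_{2*}$ (and $B,B_*$ analogously), the equilibrium relations $\delta_1=c_1e_1x_{1*}/A_*$, $\delta_2=c_2e_2x_{1*}/B_*$, and $rk/(x_{1*}+k)=c_1x_{2*}/A_*+c_2x_{3*}/B_*$ allow one to subtract equilibrium values and rewrite each response difference using the algebraic identities
\[
\tfrac{x_{i}}{A}-\tfrac{x_{i*}}{A_*}=\tfrac{(a_1+x_{1*})(x_i-x_{i*})-x_{i*}(x_1-x_{1*})}{AA_*},\qquad
\tfrac{x_1}{A}-\tfrac{x_{1*}}{A_*}=\tfrac{(a_1+b_1x_{i*})(x_1-x_{1*})-b_1 x_{1*}(x_i-x_{i*})}{AA_*},
\]
and analogously with $B,B_*$. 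After collection, the contribution of the prey's intrinsic growth is the clean $-\,rk(x_1-x_{1*})^2/[(x_1+k)(x_{1*}+k)]$, while the Beddington--DeAngelis differences produce two cross terms in $(x_1-x_{1*})(x_i-x_{i*})$ whose coefficients are proportional to $\alpha_i e_i(a_i+b_i x_{i*})-(a_i+x_{1*})$. Choosing
\[
\alpha_1 \;=\; \frac{a_1+x_{1*}}{e_1(a_1+b_1x_{2*})},\qquad
\alpha_2 \;=\; \frac{a_2+x_{1*}}{e_2(a_2+b_2x_{3*})},
\]
makes both cross terms vanish identically, and leaves the squared $(x_2-x_{2*})^2$ and $(x_3-x_{3*})^2$ coefficients strictly negative (they are $-\alpha_i c_i e_i b_i x_{1*}/(\cdot\,\cdot)<0$).

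With these choices,
\[
\dot V \;=\; -\Bigl[\tfrac{rk}{(x_1+k)(x_{1*}+k)}-\tfrac{c_1 x_{2*}}{AA_*}-\tfrac{c_2 x_{3*}}{BB_*}\Bigr](x_1-x_{1*})^2
-\mathcal{N}_2(x_2-x_{2*})^2-\mathcal{N}_3(x_3-x_{3*})^2,
\]
where $\mathcal{N}_2,\mathcal{N}_3>0$. By Proposition~\ref{prop3.1} the $\omega$-limit set of every trajectory lies in $\{x_1\le w\}$, so on this set $(x_1+k)(x_{1*}+k)\le (w+k)(x_{1*}+k)$. Using the elementary bounds $A\ge a_1$, $A_*\ge b_1 x_{2*}$ (hence $x_{2*}/(AA_*)\le 1/(a_1 b_1)$) and the analogous ones for $B,B_*$, the bracket is bounded below by $rk/[(w+k)(x_{1*}+k)]-c_1/(a_1 b_1)-c_2/(a_2 b_2)$. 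Rewritten over a common denominator this is strictly positive exactly when hypothesis (i) of the theorem holds. Hence $\dot V<0$ on $\bar\Omega\setminus\{E_*\}$, and LaSalle's invariance principle forces convergence to the singleton $\{E_*\}$, proving global asymptotic stability.

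The main obstacle is bookkeeping rather than conceptual: verifying that the chosen $\alpha_i$ really annihilate the cross terms requires careful use of the Beddington--DeAngelis algebraic identities, and the estimates leading to the ``$1/(a_ib_i)$''-bounds must be sharp enough that the residual $(x_1-x_{1*})^2$-coefficient can be compared to the prey growth term \emph{after} invoking the eventual bound $x_1\le w$. Everything else (positive definiteness of $V$, radial unboundedness on $\Omega^0$, applicability of LaSalle on the compact positively invariant set $\bar\Omega$) is standard.
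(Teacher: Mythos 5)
Your proposal is correct and follows essentially the same route as the paper: the same Volterra-type Lyapunov function, weights chosen to annihilate the two cross terms, the bounds $x_1\le w$, $A\ge a_1$, $A_*\ge b_1x_{2*}$ (and their analogues) to reduce positivity of the $(x_1-x_{1*})^2$ bracket to condition (i), and LaSalle's invariance principle. The only discrepancy is in the weights — you take $\alpha_1=\frac{a_1+x_{1*}}{e_1(a_1+b_1x_{2*})}$, $\alpha_2=\frac{a_2+x_{1*}}{e_2(a_2+b_2x_{3*})}$, whereas the paper takes $s_2=\frac{a_1+x_{1*}}{b_1e_1x_{2*}}$, $s_3=\frac{a_2+x_{1*}}{b_2e_2x_{3*}}$ — and in fact your cross-term coefficient $\alpha_ie_i(a_i+b_ix_{i*})-(a_i+x_{1*})$ is the correct outcome of the algebra (the paper's version drops the $a_i$ inside that factor), so this is a computational detail rather than a different method.
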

\textbf{Proof.}
 Let \begin{eqnarray} L(x_1,x_2,x_3)=L_{1}(x_1,x_2,x_3)+L_{2}(x_1,x_2,x_3)+L_{3}(x_1,x_2,x_3)\label{eqgas1}\end{eqnarray}
be a positive Lyapunov function, where
 \begin{eqnarray}
 ~L_{1}=s_{1}\bigl(x_1-x_{1*}-x_{1*}\ln(\frac{x_1}{x_{1*}}) \bigr),~ L_{2} =s_{2}\bigl(x_2-x_{2*}-x_{2*}\ln(\frac{x_2}{x_{2*}}) \bigr),\nonumber\\~L_{3}=s_{3}\bigl(x_3-x_{3*}-x_{3*}\ln(\frac{x_3}{x_{3*}})\bigr);\nonumber
\end{eqnarray}
 $s_{1},~s_{2}$ and $s_{3}$ being positive real constants.

This function is well-defined and continuous in Int(${R_{+}}^3$). It can be easily verified that the function $L(x_1,x_2,x_3)$ is zero at the equilibrium point $E_{*}$ and is positive for all other positive values of $(x_1,x_2,x_3),$~and thus $E_{*}$ is the global minimum of $L(x_1,x_2,x_3)$.

Since the solutions of the system are bounded and ultimately enter the set ${\Omega}=\{(x_1,x_2,x_3);x_1>0, x_2>0, x_3>0: x_1+\frac{x_2}{e_1} + \frac{x_3}{e_2}\leq M+\epsilon ,~\forall~\epsilon > 0 \}$, we restrict our study in ${\Omega}$. The time derivative of $L$ along with the  solutions of the system (\ref{eq2}) gives (cf. Sarwardi et al. \cite{Sarwardirefuge}, ~\cite{sarwardiglobal})
\begin{eqnarray}
\frac{dL}{dt}&=&-s_{1}\Bigl[\frac{rk}{(x_{1*}+k)(x_{1}+k)}-\frac{c_1 x_{2*}}{(a_1 + x_1 +b_1 x_2)(a_1 + x_{1*} +b_1 x_{2*})}\nonumber\\ &-&\frac{c_2 x_{3*}}{(a_2 + x_1 +b_2 x_3)(a_2 + x_{1*} +b_2 x_{3*})}\Bigr](x_1-x_{1*})^2- s_{2}\Bigl[\frac{c_1e_1(a_1+b_1)}{(a_1 + x_1 +b_1 x_2)(a_1 + x_{1*} +b_1 x_{2*})}\Bigr]\nonumber\\&\times&(x_2-x_{2*})^2 -s_{3}\Bigl[\frac{c_2e_2(a_2+b_2)}{(a_2 + x_1 +b_2 x_3)(a_2 + x_{1*} +b_2 x_{3*})}\Bigr](x_3-x_{3*})^2\nonumber\\&+&\frac{c_1\bigl(s_2 b_1e_1 x_{2*} - s_1(a_1+x_{1*})\bigr)(x_1-x_{1*})(x_2-x_{2*})}{(a_1 + x_1 +b_1 x_2)(a_1 + x_{1*} +b_1 x_{2*})} +\frac{c_2\bigl(s_3 b_2e_2 x_{3*} - s_1(a_2+x_{1*})\bigr)}{(a_2 + x_1 +b_2 x_2)(a_2 + x_{1*} +b_2 x_{2*})}\nonumber\\ &\times& (x_1-x_{1*})(x_3-x_{3*}). \label{gas2}
\end{eqnarray}
Letting $ s_{1} = 1$, ~$ s_{2} = \frac{a_1+x_{1*}}{b_1 e_1 x_{2*})}$ and  $s_{3} =\frac{a_2+x_{1*}}{b_2 e_2 x_{3*})},$ we have
\begin{eqnarray}
\frac{dL}{dt}&\leq&-\Bigl[\frac{rk}{(x_{1*}+k)(x_{1}+k)}-\frac{c_1 x_{2*}}{(a_1 + x_1 +b_1 x_2)(a_1 + x_{1*} +b_1 x_{2*})}\nonumber\\ &-&\frac{c_2 x_{3*}}{(a_2 + x_1 +b_2 x_3)(a_2 + x_{1*} +b_2 x_{3*})}\Bigr](x_1-x_{1*})^2\nonumber\\
&<&-\Bigl[\frac{rk}{(x_{1*}+k)(w+k)} -\frac{c_1}{a_1b_1}-\frac{c_2}{a_2b_2}\Bigr](x-x_{*})^2 \nonumber\\
&<& 0, \hbox{ by condition (\mbox{i})},
\end{eqnarray}

along all the trajectories in the positive octant except $(x_{1*}, x_{2*}, x_{3*})$. Also $\frac{dL}{dt} = 0$ when $(x_1,x_2,x_3) = (x_{1*}, x_{2*}, x_{3*})$.
The proof follows from (\ref{eqgas1}) and Lyapunov–-Lasalle's
invariance principle (cf. Hale \cite{Hale}).

\begin{table}[ht]
\caption{Schematic representation of our analytical findings: LAS = Locally
asymptotically stable, GAS = Globally asymptotically stable, HB = Hopf bifurcation, SHB = Subcritical Hopf bifurcation.}
\begin{center}
\begin{tabular}{|c|c|c|c|}\hline
{\textbf{Equilibria}}& \parbox[t]{1.80in}{\textbf{~~Feasibility conditions/ \,parametric restrictions~}}& \parbox[t]{1.80in}{\textbf{~~Stability conditions/ \,parametric restrictions~}}& \textbf{Nature} \\
[3ex]
\hline
$E_{0}$ & {No Conditions} & {No Conditions} & {Unstable}  \\
[1.5ex] \hline
$E_{1}$ & {$c_1e_1 >\delta_1$,~ $x_{1_1}> \frac{b_1 \delta_1}{c_1 e_1-\delta_1}$} & {$e_1 x_{1_1} + x_{2_1}>\frac{k(1-b_1 e_1)-a_1}{b_1}$} & {LAS}  \\
[1.5ex] \hline
$E_{2}$ & {$c_2e_2 >\delta_2$,~ $x_{1_2}> \frac{b_2 \delta_2}{c_2 e_2-\delta_2}$} & {$e_2 x_{1_2} + x_{3_2}>\frac{k(1-b_2 e_2)-a_2}{b_2}$} & {LAS}  \\
[1.5ex] \hline
$E_{*}$ & $x_{1*}>\max
\left\{\frac{a_1\delta_1}{c_1e_1-\delta_1},
\frac{a_2\delta_2}{c_2e_2-\delta_2} \right\}$   & {$k< \min{\{a_1+b_1 x_{2*},~\linebreak a_2+b_2 x_{3*}\}}$} &{LAS}  \\
[0.5ex] \hline
$E_{*}$ & {\ldots\ldots\ldots\ldots\ldots\ldots\ldots} &\parbox[t]{1.80in} {$(i)\, r > \delta_1 + \delta_{2},~
(ii) ~ x_{1_{1}} >\frac{a_{2}\delta_{2}}{c_{2} e_{2} -\delta_{2}}, \, (iii) \, x_{1_{2}} >\frac{a_{1}\delta_{1}}{c_{1} e_{1} -\delta_{1}}$} & {Persistence} \\
[1.5 ex] \hline
$E_{*}$ & {\ldots\ldots\ldots\ldots\ldots\ldots\ldots} & {Stated in the Proposition \ref{prop2}} & {Boundedness} \\
[1.5 ex] \hline
$E_{*}$ & {\ldots\ldots\ldots\ldots\ldots\ldots\ldots} & $\Pi>0$ {(cf. equation (\ref{eqpi}))}  & {SHB}  \\
[1.5 ex] \hline
$E_{*}$ & $x_{1*}>\max
\left\{\frac{a_1\delta_1}{c_1e_1-\delta_1},
\frac{a_2\delta_2}{c_2e_2-\delta_2} \right\}$ & \parbox[t]{2.0in}{$a_1 a_2 b_1 b_2 r k> (x_{1*} +k)(w +k)\times(a_1 b_1 c_2 + a_2 b_2 c_1)$} & {GAS}  \\
[1.5ex] \hline
\end{tabular}\label{Table1}
\end{center}
\end{table}
\begin{table}
\centering \caption{The set of system parameter (including the critical parameter $r_{c}$) values and their corresponding Figures with description.}
\begin{tabular}{ |c| c| c | c |c|c|}\hline
No. & \parbox[t]{1.5in}{Fixed Parameters}& {$ r $}& {Figures} &{Description} \\[3ex] \hline
 1 &\parbox[t]{2.2in}{$r= 1.37 > r_{c} = 1.320961640, k = 200, a_1 = 100, a_2= 100, b_1 = 0.5, b_2 = 0.5; c_1 = 1.8, c_2 = 1.8, \delta_1 = 0.82, \delta_2 = 0.62, e_1 = 0.8143, e_2 = 0.6250.$.}  & {$1.37$}& \parbox[t]{0.55in}{Figs. 1: (a)-(b)} & \parbox[t]{01.2in}{2D view of Hopf bifurcation}\\[2.5ex] \hline
 2&${\ldots\ldots\ldots\ldots\ldots\ldots\ldots}$ & \parbox[t]{1.0in}{$1.320961640$}& \parbox[t]{0.55in}{Figs. 2} & {Limit cycle}\\[1.5ex] \hline
 3& ${\ldots\ldots\ldots\ldots\ldots\ldots\ldots}$  &\parbox[t]{1.0in}{$r\in [0.8, 2.0]$} &\parbox[t]{0.5in}{Fig. 3}& \parbox[t]{01.35in}{Hopf bifurcation (growth rate $r$ vs. population volumes)} \\[1.5 ex] \hline
 4 & ${\ldots\ldots\ldots\ldots\ldots\ldots\ldots}$  &\parbox[t]{1.0in}{$1.4700000000$} & \parbox[t]{0.5in}{Fig. 4} &\parbox[t]{01.2in}{2D view of local stability} \\[0.5ex] \hline
  5& ${\ldots\ldots\ldots\ldots\ldots\ldots\ldots}$ &  \parbox[t]{1.0in}{$1.4700000000$} & \parbox[t]{0.5in}{Fig. 5} & \parbox[t]{01.2in}{3D view of local stability}  \\[0.5ex] \hline
   6& ${\ldots\ldots\ldots\ldots\ldots\ldots\ldots}$  &\parbox[t]{1.0in}{$1.4700000000$} & \parbox[t]{0.5in}{Fig. 6} & {Global stability} \\[0.5ex] \hline
   7& \parbox[t]{2.2in}{$r= 1.37, k=200, a_1= 100, a_2=100, b_1=0.5; b_2=0.5, c_1=1.8, c2=1.8, \delta_1=0.82, \delta_2=0.62, e_1=0.8143, e_2=0.6250$}  &\parbox[t]{1.2in}{$r_c = 1.320961640,$ ~$\Pi=1.0424314050$} & \parbox[t]{0.55in}{Figs. 7} & \parbox[t]{01.2in}{Subcritical Hopf bifurcation} \\[0.5ex] \hline
\end{tabular}\label{Table2}\\[1.5ex]
\end{table}

\section{Numerical simulation}\label{Numerical}
  For the purpose of making qualitative analysis of the present study, numerical simulations have been carried out by making use of MATLAB-R2010a and Maple-12. The analytical findings of the present study are summarized and represented schematically in Table \ref{Table1}. These results are all verified by means of numerical illustrations of which some chosen ones are exhibited in the figures. Here, we have given some numerical simulations on the study of stability and bifurcation of the proposed system (\ref{eq2}) around the interior equilibrium $E_{*}.$ We took a set of admissible parameter values: $r=1.7, k=200, a_1=a_2=100, b_1=b_2=0.5, c_1=c_2 = 1.8, \delta_1 =0.82, \delta_2=0.62, e_1=0.8143, e_2=0.6250.$ For this set of parameter values, it is found that the system possessed an unique interior equilibrium point $E_{*} = (169.1663564,55.36073780, 62.98120968).$ The system parameter $r$ is the growth rate of the prey population which plays a crucial role in regulating the dynamical behaviour of the proposed system. For this reason, we take this parameter as an influential parameter and try to determine the possible outcomes by varying this parameter within its feasible range. The interior equilibrium $E_*$ is stable for the values of $r>r_c=1.320961640$ (cf. Figure: \ref{f4}-\ref{f5} for local stability and Figure: \ref{f6} for global stability). The system (\ref{eq2}) experiences Hopf bifurcation when the parameter $r$ crosses the critical value $r_c$ from left to right, i.e., when $r=r_c$,  all the species coexist in the form of periodic oscillation. Following the steps discussed in Subsection \ref{subcriticalH}, we have found the value of $\Pi=1.042431405>0,$ which indicates that the the obtained Hopf bifurcation is subcritical bifurcation (cf. Figure \ref{f7}).

  It is observed that, if the interference coefficient $b_1$ (interference effect due to the presence of second predator on the first predator) increases it stabilize the system for $b_1=0.7$ while it is unstable at $b_1=0.6$ and when the parameter $b_1$ exceeds its value 20, the first predator population is died out from the system, i.e., the system breakdown. Similarly, the interference effect due to the presence of first predator on the second predator, parameterized by $b_2$ plays an important role to stabilize the system. If the interference coefficient $b_2$ increases it stabilizes the system for $b_2=0.6,$ while it is unstable at $b_2=0.5.$ It also regulates the existence of second predator in the system. As the parameter $b_2$ exceeds its value 20.9, the second predator population is died out from the system.

  Analogously, if the parameter $\delta_1,$ denoting the death rate of first predator increases then the volume of the fist predator decreases as well as second predator population increases and if $\delta_1$ decreases, the first predator population increases and second predator population decreases. If the death rate $\delta_1$ is gradually increased to a certain level the first predator population goes into extinction. Similar result is observed for the case of the second predator's death rate. The above observations ensure that the model under consideration is consistent with biological phenomenon (Figures are not reported here).

  \begin{figure}[h]
\centering
\begin{tabular}{cc}
(a) \epsfig{file=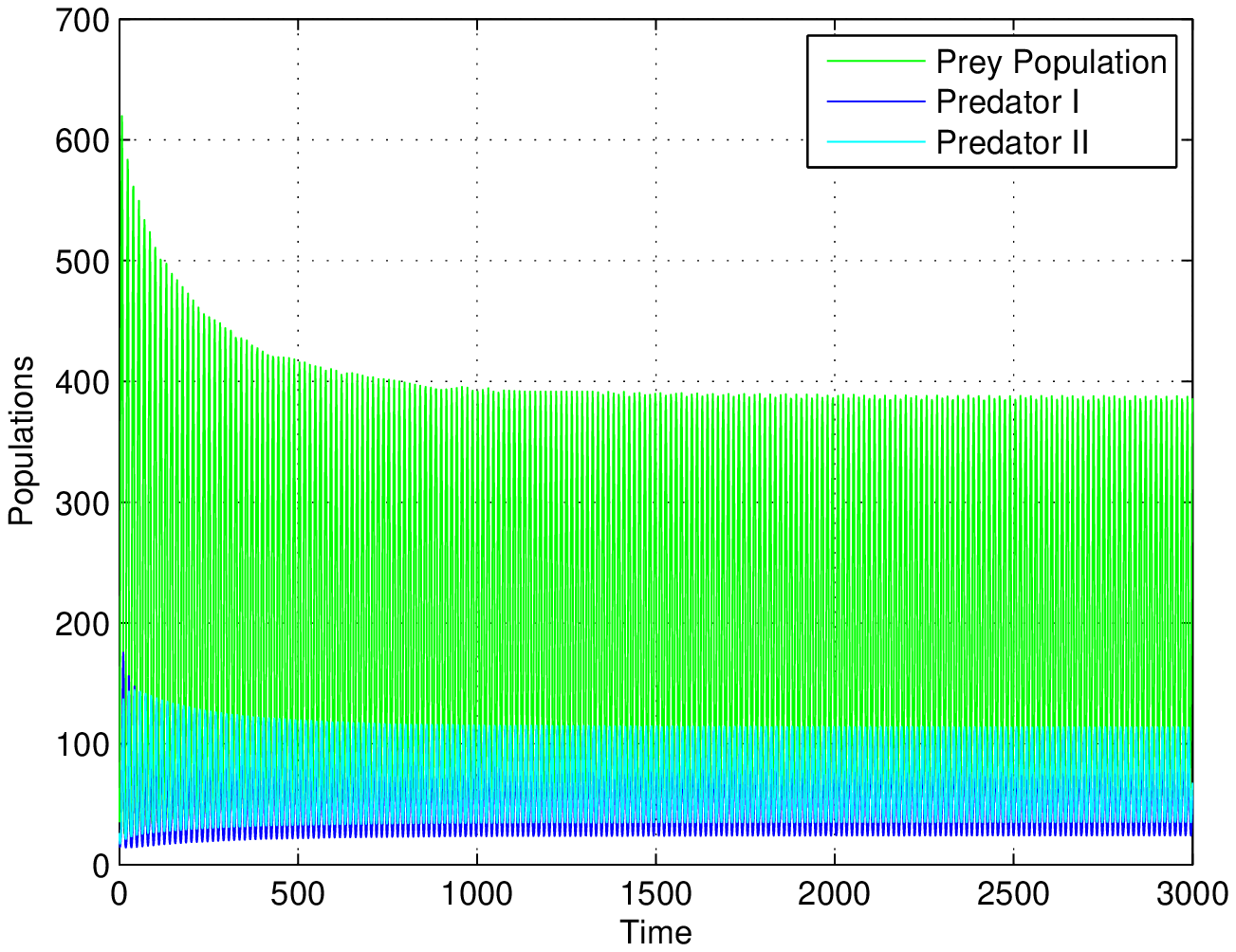,width=0.4\linewidth,height=5cm,clip=} &
(b) \epsfig{file=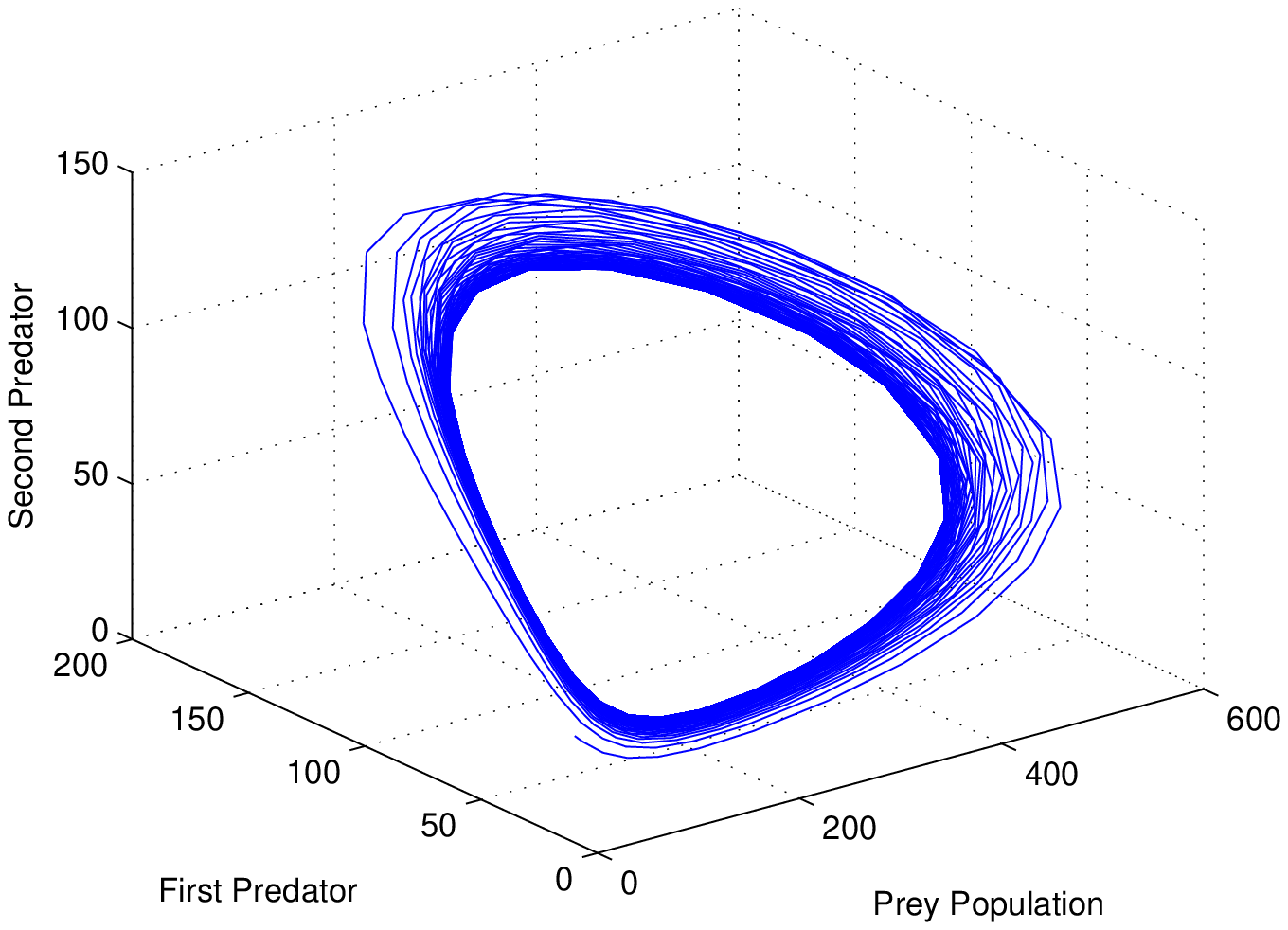,width=0.4\linewidth,height=5cm,clip=}
\end{tabular}
 \caption{\textrm{\small 2D view of Hopf bifurcation around the interior equilibrium $E_{*}$ of the system (\ref{eq2}) with parameter values: $r= 1.37 > r_{c} = 1.320961640, k = 200, a_1 = 100, a_2= 100, b_1 = 0.5, b_2 = 0.5; c_1 = 1.8, c_2 = 1.8, \delta_1 = 0.82, \delta_2 = 0.62, e_1 = 0.8143, e_2 = 0.6250.$}}\label{f1}
\end{figure}

\begin{figure}[tbhp]
\begin{center}
\includegraphics[width=11cm]{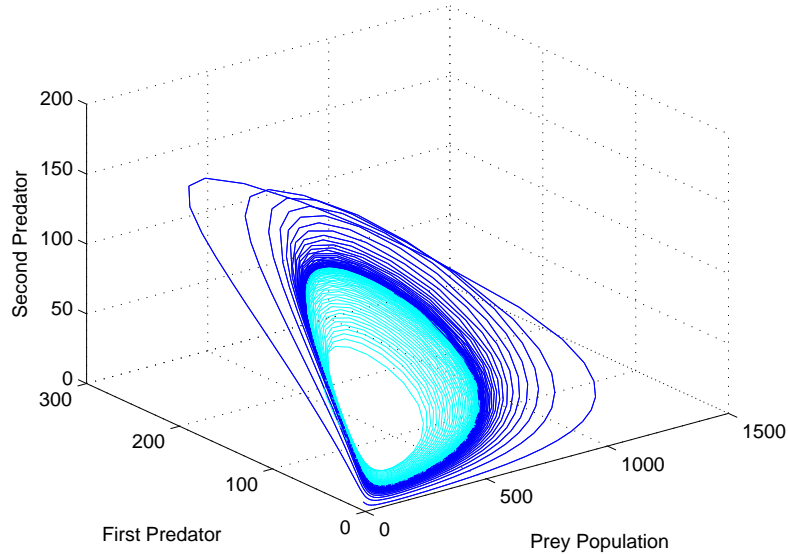}
\end{center}
\caption{Limit cycle behaviour of the dynamical system at $E_{*}$ with the same parameter values used for Figure \ref{f1}.}\label{f2}
\end{figure}

\begin{figure}
\centering
\includegraphics[width=14cm, height=12cm]{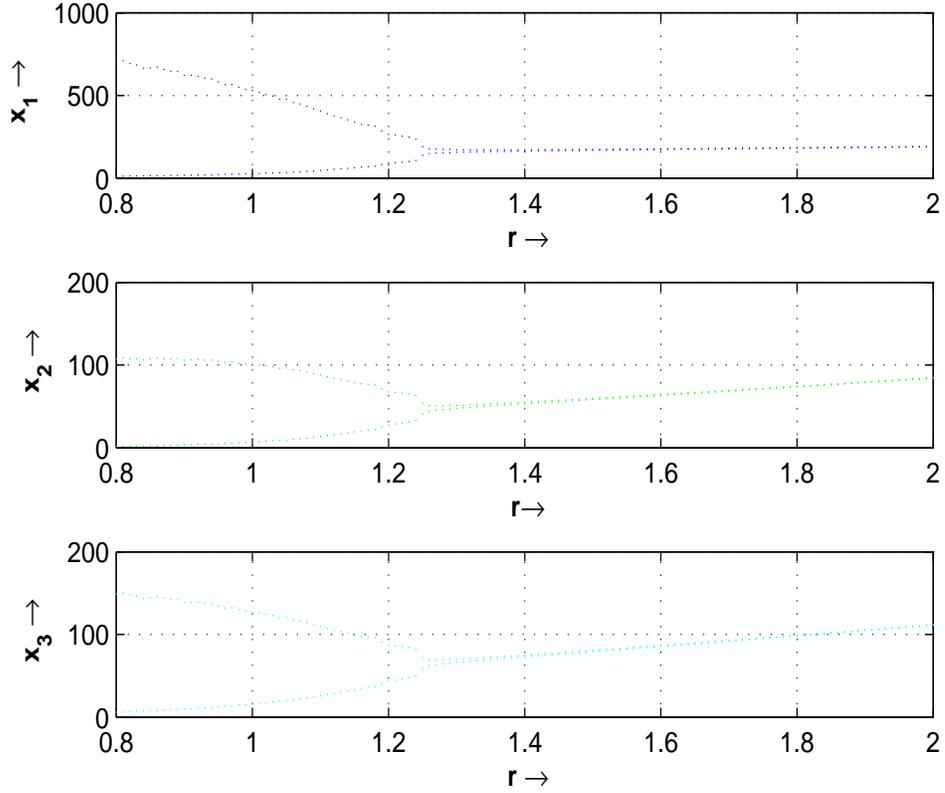}
 \caption{\textrm{\small Bifurcation diagram for all the populations with $r$ as the bifurcating parameter around the interior equilibrium $E_{*}$ of the system (\ref{eq2}).}}\label{f3}
\end{figure}

\begin{figure}
\centering
\includegraphics[width=11cm, height=6cm]{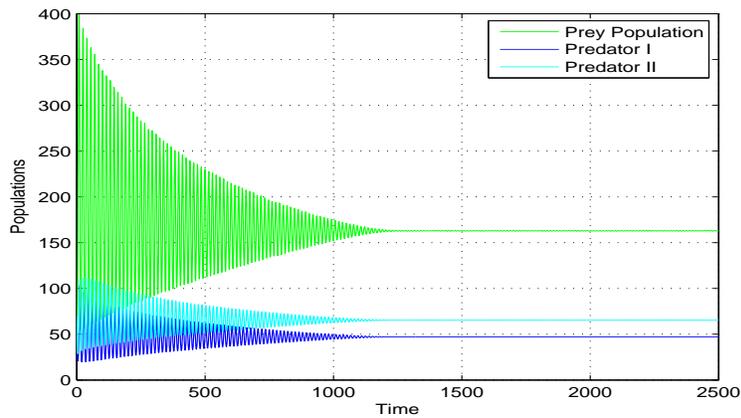}
 \caption{\textrm{\small 2D view of Local asymptotic stability of the system (\ref{eq2})  around the interior equilibrium $E_{*}$ of the system (\ref{eq2}) with parameter values: $r= 1.47 > r_{c} = 1.320961640, k = 200, a_1 = 100, a_2= 100, b_1 = 0.5, b_2 = 0.5; c_1 = 1.8, c_2 = 1.8, \delta_1 = 0.82, \delta_2 = 0.62, e_1 = 0.8143, e_2 = 0.6250.$}}\label{f4}
\end{figure}
\begin{figure}[tbhp]
\begin{center}
\includegraphics[width=11cm]{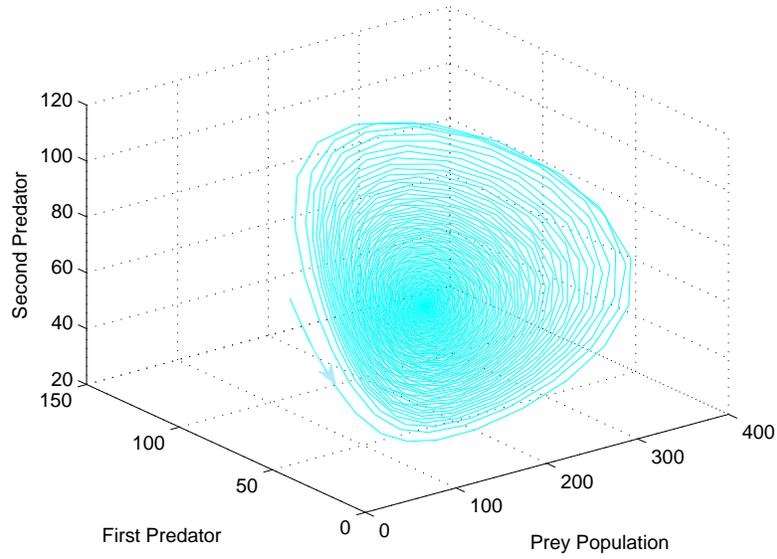}
\end{center}
\caption{3D view of local asymptotic stability of the dynamical system at $E_{*}$ with the same parameter values used for Figure \ref{f4}.}\label{f5}
\end{figure}
\begin{figure}[tbhp]
\begin{center}
\includegraphics[width=11cm]{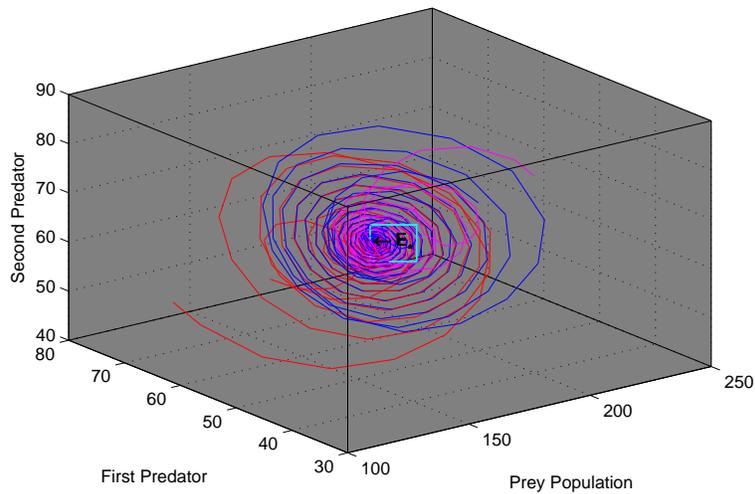}
\end{center}
\caption{Solution plots with different starting points converge to the interior equilibrium point $E_{*}=(169.1663564,55.36073780, 62.98120968),$ showing that the system (\ref{eq2}) is global asymptotic stable. Here the same set of parameter values is used for Figure \ref{f4} except $r=1.29.$}\label{f6}
\end{figure}

\begin{figure}[tbhp]
\begin{center}
\includegraphics[width=11cm]{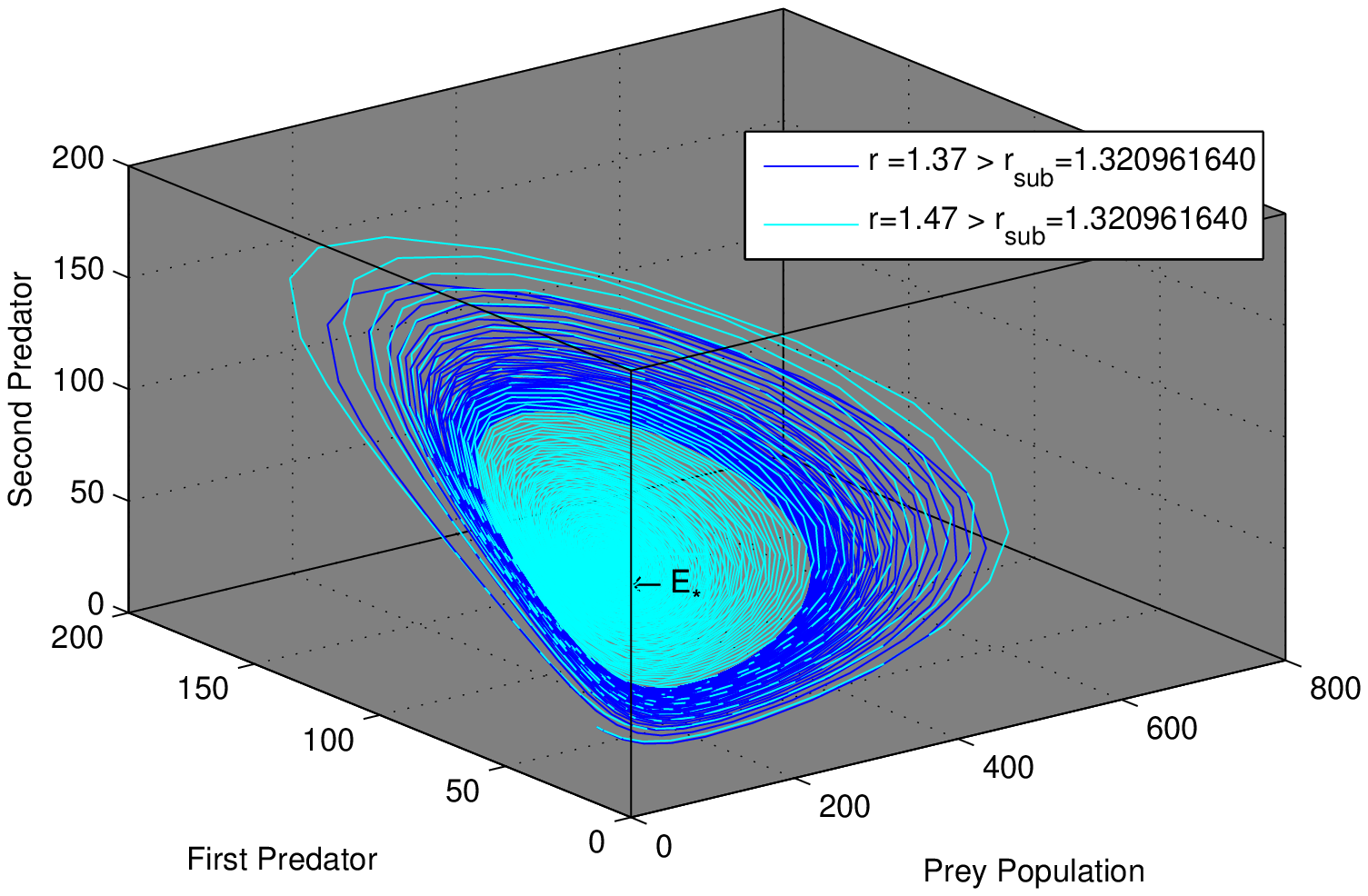}
\end{center}
\caption{Solution plots showing that the system (\ref{eq2}) experiences subcritical Hopf bifurcation for $r>r_{\text{sub}}.$ Here the set of parameter values used is mention in the last row of Table 2.}\label{f7}
\end{figure}

\section{Concluding remarks}\label{Discussion}
The problem describes by the system (\ref{eq2}) is well posed that $x_1,$ $x_{2}$ and $x_{3}$ axes are invariant under the flow of the system. So far our knowledge goes this is the first attempt to study an ecological system with semilinear/bilinear growth of the prey population. Generally, researcher only studied biological model systems with logistic/linear growth of prey population. Here is the novelty of our study. One of the important observations is that the prey population becomes unbounded in absence of its admissible predator in long run of time. But in the presence of predator species the prey population can be made bounded under suitable combination of system parameters and as a consequence it is shown that the total environmental population under consideration is bounded above (cf. Subsection \ref{bounded}). Therefore, any solution starting in the interior of the first octant never leaves it. This mathematical fact is consistent with the biological interpretation of the system. Due to the inclusion of semilinear/bilinear growth of the prey population, the axial equilibrium point is driven away by the system, which is rarely found in the modern research work on Mathematical biology. Thus, the prey population alone can not survive in stable condition without their admissible predator populations. It is found that only the mutual interference between the predators, which are parameterized by $b_{1}$ and $b_{2}$ can alone able to stabilize the prey-predator interactions even when a semilinear/bilinear intrinsic growth rate of prey population is considered in the proposed mathematical model. Whereas these parameters have much contribution in stabilizing prey-predator interactions when only linear intrinsic growth rate is considered in some mathematical models (cf. Dimitrov and  Kojouharov \cite{Dimitrov}).  It is observed in the study of this model system that there exist a balance between the predator's need for food and its saturation level and in this case is likely to be expect a periodic behaviour in long run. This behaviour is neutrally stable but relatively unstable. A small change in the parameters (caused by environmental changes for instances) forces the system to stabilize around the interior equilibrium or to oscillate indefinitely around interior equilibrium (by going away from it, which causes collapse of the system or breaks the coexistence of the population). Representative numerical simulations of this case are shown in Figures: \ref{f1}-\ref{f3}, which support our analytical findings (cf. Theorems \ref{theorem1} and \ref{theorem2}). We have also established the sufficient conditions for the global stability of the coexistence equilibrium (cf. Figures: \ref{f5}-\ref{f6}).

{\bf Acknowledgement:} Authors are thankful to the Department of Mathematics, Aliah University for providing opportunities to perform the present work.  Dr. S. Sarwardi is thankful to his Ph.D. supervisor Prof. Prashanta Kumar Mandal, Department of Mathematics, Visva-Bharati (a Central University) for his generous help while preparing this manuscript.

\end{document}